\newtheorem{theoreme}{Theorem}[subsection]
\newtheorem{fait}[theoreme]{Fact}
\newtheorem{lemme}[theoreme]{Lemma}
\newtheorem{proposition}[theoreme]{Proposition}
\newtheorem{question}[theoreme]{Question}
\newtheorem{conjecture}[theoreme]{Conjecture}
\newcommand{\Q}{\mathbb{Q}}
\newcommand{\Z}{\mathbb{Z}}
\newcommand{\SGL}{{\hbox{\rm GL}}}
\newcommand{\SH}{{\hbox{\rm H}}}
\newcommand{\Sim}{\hbox{\rm im}}
\newcommand{\ra}{\rightarrow}
\newcommand{\C}{\mathbb C}
\title{A remark on uniform boundedness for Brauer groups}
\author{Anna Cadoret and Fran\c{c}ois Charles}
\date{}
\begin{document}
\maketitle
 \textit{}\\
\begin{abstract}
\noindent \scriptsize{ }  \\
The Tate conjecture for divisors on varieties over number fields is equivalent to finiteness of $\ell$-primary torsion in the Brauer group. We show that this finiteness is actually uniform in one-dimensional families for varieties that satisfy the Tate conjecture for divisors -- e.g. abelian varieties and $K3$ surfaces.

 \end{abstract}\textit{} \\
 \normalsize

 \section{Introduction}

\subsection{}\hspace{-0.3cm} Let $k$ be a field of characteristic $p\geq 0$ and let $\overline k$ be a separable closure of $k$. Let $X$ be a smooth proper variety over  $k$. For any prime number $\ell\not=p$, consider the cycle class map
$$c_1 : \mathrm{Pic}(X_{\overline k})\otimes\Q_\ell\ra H^2(X_{\overline k}, \Q_\ell(1)).$$
The absolute Galois group  $Gal(\overline k/k)$ of $k$ acts naturally on the \'etale cohomology group $H^2(X_{\overline k}, \Q_\ell(1))$, and the image of the cycle class map $c_1$ is contained in the group 
$$\bigcup_{U} H^2(X_{\overline k}, \Q_\ell(1))^U,$$
where $U$ runs through the open subgroups of $Gal(\overline k/k)$. We say that $X$ satisfies the $\ell$-adic Tate conjecture for divisors if the image of $c_1$ is equal to the group above.

As is well-known, the Tate conjecture for divisors is related to finiteness results for the Brauer group. Indeed, given $X$ and $\ell$ as above, $X$ satisfies the $\ell$-adic Tate conjecture for divisors if and only if for every subgroup $U$ of finite index in $Gal(\overline k/k)$, the group 
$$Br(X_{\overline k})^U[\ell^{\infty}]$$ 
is finite.


\subsection{}\hspace{-0.3cm} In this paper, we investigate whether one might be able to find uniform bounds on the Brauer groups above when the variety $X$ varies in a family. Following V\'arilly-Alvarado's conjectures on $K3$ surfaces \cite[par. 4]{Varilly17}, such a question might be regarded as an analogue of results of Manin \cite{Manin}, Faltings-Frey \cite{FreyF}, Mazur \cite{Mazur} and Merel \cite{Merel} on uniform bounds for torsion of elliptic curves. Related boundedness results for families of smooth proper varieties have been developed by Cadoret-Tamagawa \cite{UOI1}, \cite{UOI2}, \cite{TI}, \cite{GA}. It should be noted that all the aforementioned results can only treat the case of one-dimensional bases. Dealing with the higher-dimensional case seems to be much harder, as rational points are not well-understood on varieties of dimension at least $2$.

The main result of this paper is as follows.

\begin{theoreme}\label{theorem:main} Let $k$ be a finitely generated field of characteristic zero. Let $S$ be a curve over $k$, and let $\pi : X\ra S$ be a smooth, proper morphism. Let $\ell$ be a prime number. Then for every positive integer $d$,  there exists a constant $C_d$  satisfying the following property: for every field extension $k\subset K\subset \overline{k}$ with degree $[K:k]\leq d$ and every $K$-point $s$ such that  $X_s$ satisfies the $\ell$-adic Tate conjecture for divisors $$\big|Br(X_{s,\overline{k}})^{Gal(\overline k/K)}[\ell^\infty]\big|\leq C_d.$$
\end{theoreme}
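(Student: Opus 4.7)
My plan is to combine the equivalence (recalled in the introduction) between the $\ell$-adic Tate conjecture for divisors and finiteness of $G$-invariants in $Br[\ell^\infty]$ with the uniform open image theorems of Cadoret--Tamagawa for $\ell$-adic local systems over one-dimensional bases. First I set $V_s := H^2(X_{s,\overline k}, \Z_\ell(1))/\text{torsion}$, $N_s := NS(X_{s,\overline k})\otimes \Z_\ell \subset V_s$, and $G := Gal(\overline k/K)$. Taking $G$-invariants in the Kummer short exact sequence
$$0 \to N_s \otimes \Q_\ell/\Z_\ell \to H^2(X_{s,\overline k}, \Q_\ell/\Z_\ell(1)) \to Br(X_{s,\overline k})[\ell^\infty] \to 0,$$
the Tate hypothesis on $X_s$ forces the divisible parts of the first two $G$-invariant subgroups to coincide, so that $Br(X_{s,\overline k})^G[\ell^\infty]$ is finite; a standard diagram chase then dominates its size by the product of (a) the finite group $V_s^G/N_s^G$, which measures the integral defect of the Tate conjecture, and (b) the image in $H^1(G, N_s\otimes \Q_\ell/\Z_\ell)$ of the connecting morphism, whose $\ell^\infty$-part can itself be controlled by $H^1$-terms of $V_s$ via the long exact sequence associated with $0 \to N_s \to V_s \to V_s/N_s \to 0$.

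Next I would feed the $\ell$-adic local system $R^2\pi_*\Z_\ell(1)$ on $S$ into the Cadoret--Tamagawa uniform open image theorems \cite{UOI1,UOI2,TI,GA}: for each integer $d \geq 1$ these supply a finite subset $\Sigma_d \subset S$ of closed points of degree $\leq d$ and an integer $n_d$ such that for every $K/k$ with $[K:k] \leq d$ and every $s \in S(K)\setminus \Sigma_d$, the image $\rho_s(G) \subseteq GL(V_s)$ is open of index at most $n_d$ in the generic arithmetic monodromy attached to $R^2\pi_*\Z_\ell(1)$. Using that the generic Néron--Severi lattice $N_\eta$ specializes into $N_s$ for every closed point $s$, this openness provides a uniform bound on $|V_s^G / N_\eta^G|$, hence a fortiori on $|V_s^G/N_s^G|$, uniformly in $s \in S(K)\setminus \Sigma_d$, while an analogous restriction--inflation argument controls the relevant $H^1$-term. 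For $s \in \Sigma_d$, which is finite, the Tate hypothesis applied to the fixed variety $X_s$ yields, for each such $s$ and each fixed extension type, a finite bound on $|Br(X_{s,\overline k})^G[\ell^\infty]|$; taking the maximum over $\Sigma_d$ and the uniform generic bound yields the desired constant $C_d$.

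The main obstacle is verifying that the Cadoret--Tamagawa machinery applies to the representation $R^2\pi_*\Z_\ell(1)$ in the form needed here: notably, the theorem only assumes Tate for each $X_s$, not for the generic fiber $X_\eta$, so one must invoke the versions of uniform open image valid for arbitrary $\ell$-adic local systems over a curve. One must also handle the non-constancy of the Néron--Severi specialization: points where $N_s$ strictly exceeds $N_\eta$ could a priori contribute extra invariants to the Brauer group, and these ``jumps'' must be shown either to be absorbed into $\Sigma_d$ or to contribute only a uniformly bounded excess. The compatibility between Néron--Severi jumps and Galois-monodromy exceptional loci of bounded degree is precisely the kind of statement the Cadoret--Tamagawa framework is designed to supply, so the proof should then proceed by packaging these ingredients together with the Kummer-theoretic bound from the first step.
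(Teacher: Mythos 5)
Your overall architecture matches the paper's: a Kummer-theoretic finiteness/bounding statement at a single fiber, the Cadoret--Tamagawa uniform open image theorem for $R^2\pi_*\Z_\ell(1)$ over a curve to get a fixed open subgroup $U_d$ controlling all non-exceptional points of degree $\le d$, and a separate treatment of the finitely many exceptional points. But there is a genuine gap exactly at the step you flag as an ``obstacle'' and then wave towards the Cadoret--Tamagawa framework: the possible jumping of the N\'eron--Severi group at special fibers. Openness of $\rho_\ell(\pi_1(s))$ in the generic monodromy gives you control of $V_s^G$ only relative to the \emph{generic} invariants; if $NS(X_{\overline s})$ strictly exceeds the specialization of $NS(X_{\overline\eta})$, then (under Tate at $s$) $V_s^G$ acquires extra algebraic invariants and your quantity $|V_s^G/N_\eta^G|$ need not even be finite, so the claimed uniform bound on $|V_s^G/N_s^G|$ and on the $H^1$-term does not follow. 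This is not ``precisely the kind of statement the Cadoret--Tamagawa framework is designed to supply'': their theorems concern images of $\ell$-adic representations and say nothing about algebraic cycles or N\'eron--Severi jumps. The paper closes exactly this hole with a separate geometric argument (its Proposition \ref{proposition:lifting-Tate}): for every $s$ with open monodromy image, the specialization maps $NS(X_{\overline\eta})\ra NS(X_{\overline s})$ and $Br(X_{\overline\eta})\ra Br(X_{\overline s})$ are isomorphisms, proved via the theorem of the fixed part, semisimplicity of polarized Hodge structures and the Lefschetz $(1,1)$ theorem applied to a smooth compactification of a finite \'etale cover of the family. Once this is known, all invariants can be computed on the single fixed module $Br(X_{\overline\eta})[\ell^\infty]$ with the fixed subgroup $U_d$, which is what makes the bound uniform; without it your fiber-by-fiber estimate has no uniform content at possible jump points, and nothing in your argument absorbs such points into $\Sigma_d$.

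A second, smaller gap: your treatment of the exceptional locus is not yet uniform in $K$. The image of $S^{ex}_{\ell,\le d}$ in $S$ is a finite set of closed points, but each such point $s_0$ underlies $K$-points for infinitely many fields $K$ of degree $\le d$, with varying groups $G=Gal(\overline k/K)$, so ``a finite bound for each fixed extension type'' does not terminate. The paper handles this by noting that $\rho_\ell(\pi_1(s_0))$ is a compact $\ell$-adic Lie group, hence has only finitely many open subgroups of index $\le d$; intersecting them gives one open subgroup $U_d(s_0)\subset\rho_\ell(\pi_1(s))$ valid for all such $K$, and finiteness of $Br(X_{\overline{s}})^{U_d(s_0)}[\ell^\infty]$ then follows from the Tate hypothesis at that single fiber. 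You would need this (or an equivalent uniformity statement) to finish your last step.
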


Examples of projective varieties for which the Tate conjecture for divisors is known -- and so for which Theorem \ref{theorem:main} applies  -- include abelian varieties and $K3$ surfaces \cite{Faltings83, TateMotives}.

In the case of certain one-parameter families of abelian and $K3$ surfaces -- which are actually parametrized by Shimura curves -- the theorem above was proved by V\'arilly-Alvarado-Viray \cite[Theorem 1.5, Corollary 1.6]{VAV}. While their proof relies on deep arithmetic properties of elliptic curves, Theorem \ref{theorem:main} is derived as a consequence of the results of Cadoret-Tamagawa in \cite{UOI2}. We will also prove a version of Theorem \ref{theorem:main} that does not involve the Tate conjecture -- see Theorem \ref{theorem:mainbis}.

\bigskip

\noindent\textbf{Outline of the paper.} Section \ref{section:generalities} is devoted to some general statements and questions about Brauer groups and their behavior in families. In Section \ref{section:proof-main} we prove Theorem \ref{theorem:main} and its unconditional variant.
\bigskip

\noindent\textbf{Acknowledgements.} The work on this note started after listening to an inspiring talk of V\'arilly-Alvarado on his joint work with Viray at the conference "Arithmetic Algebraic Geometry" at the Courant Institute in september 2016. We thank Tony V\'arilly-Alvarado for useful discussions and sharing his conjectures with us. We thank Emiliano Ambrosi for helpful discussions. \\

This project has received funding from  the ANR grant ANR-15-CE40-0002-01 and, for the  second author, from the European Research Council (ERC) under the European Union's Horizon 2020 research and innovation programme (grant agreement No 715747).  

\bigskip

\noindent\textbf{Notation.} If $A$ is an abelian group, and $n$ is an integer, we write $A[n]$ for the $n$-torsion subgroup of $A$. If $\ell$ is a prime number, we write 
$$A[\ell^\infty]:=\lim_{\ra} A[\ell^n]$$
for the $\ell$-primary torsion subgroup of $A$, and 
$$T_\ell(A):=\lim_{\leftarrow} A[\ell^n]$$
for its $\ell$-adic Tate module.

Let $X$ be a smooth proper variety over a field $k$. We write $Br(X)$ for the Brauer group of $X$, that is, the \'etale cohomology group $H^2(X, \mathbb G_m)$, and $NS(X)$ for the N\'eron-Severi group of $X$, that is, the image of $Pic(X)$ in $H^2(X_{\overline k}, \Q_{\ell}(1))$, where $\overline k$ is a separable closure of $k$.

\section{A question on uniform finiteness for Brauer groups}\label{section:generalities}

In this section, we recall the relationship between the Tate conjecture and finiteness of the Brauer group. While the Tate conjecture is a qualitative statement, finiteness problems can be made quantitative, which leads us to Question \ref{conjecture:boundedness-in-families}.

\subsection{}\hspace{-0.3cm}The following proposition provides the basic relationship between the $\ell$-adic Tate conjecture and finiteness for Brauer groups.

Let $k$ be a field of characteristic $p\geq 0$. Let $X$ be a smooth proper variety over $k$. Choose $\ell$ a prime different from $p$. Let $B:=Br(X_{\overline{k}})$ be the Brauer group of $X_{\overline k}$, and set
$$T_\ell:=T_\ell(B),\, V_\ell:=T_\ell\otimes\Q_\ell, \,M_\ell:=T_\ell\otimes\Q_\ell/\Z_\ell\simeq V_\ell/T_\ell.$$
For every integer $n\geq 0$, we can identify $M_\ell[\ell^n]$ with $T/\ell^n$.\\

\begin{proposition}\label{proposition:basic-bound}
The following assertions are equivalent
 \begin{enumerate}
 \item The $\ell$-adic Tate conjecture holds for divisors on $X$;
 \item  $B^U[\ell^\infty]$ is finite for every open subgroup $U\subset  Gal(\overline k/k)$;
 \item  $M_\ell^U $ is finite for every open subgroup $U\subset  Gal(\overline k/k)$;
 \item  $T_\ell^U=0$ for every open subgroup $U\subset  Gal(\overline k/k)$.

  \end{enumerate}
And,  if this holds, for every open subgroup $U\subset Gal(\overline k/k)$ one has 
\begin{enumerate}
\setcounter{enumi}{4}
\item  $\big|B^U[\ell^\infty]\big|\leq \big|M_\ell^U\big|+\big| \SH^3(X_{\overline{k}},\Z_\ell(1))[\ell^\infty]\big|.$
\end{enumerate}

 \end{proposition}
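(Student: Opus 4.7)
The plan is to translate everything into \'etale cohomology through the Kummer sequence, then juggle a few standard short exact sequences.

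First, from the Kummer sequence on $X_{\overline{k}}$,
$$0 \to \mathrm{Pic}(X_{\overline{k}})/\ell^n \to H^2(X_{\overline{k}}, \mu_{\ell^n}) \to B[\ell^n] \to 0,$$
and the observations that $\mathrm{Pic}^0(X_{\overline{k}})$ is $\ell$-divisible (characteristic zero) and $NS(X_{\overline{k}})$ is finitely generated, passing to the inverse limit yields
$$0 \to NS(X_{\overline{k}}) \otimes \Z_\ell \to H^2(X_{\overline{k}}, \Z_\ell(1)) \to T_\ell \to 0, \qquad (\dagger)$$
while passing to the direct limit yields
$$0 \to NS(X_{\overline{k}}) \otimes \Q_\ell/\Z_\ell \to H^2(X_{\overline{k}}, \Q_\ell/\Z_\ell(1)) \to B[\ell^\infty] \to 0. \qquad (\ddagger)$$

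Next I would produce the key sequence connecting $M_\ell$ and $B[\ell^\infty]$. Tensoring $(\dagger)$ with $\Q_\ell/\Z_\ell$---exact because $T_\ell$ is torsion-free---produces a short exact sequence whose last term is $M_\ell$. Comparing with $(\ddagger)$ through the coefficient sequence
$$0 \to H^2(X_{\overline{k}}, \Z_\ell(1)) \otimes \Q_\ell/\Z_\ell \to H^2(X_{\overline{k}}, \Q_\ell/\Z_\ell(1)) \to H^3(X_{\overline{k}}, \Z_\ell(1))[\ell^\infty] \to 0$$
arising from $0 \to \Z_\ell \to \Q_\ell \to \Q_\ell/\Z_\ell \to 0$, a snake lemma chase delivers
$$0 \to M_\ell \to B[\ell^\infty] \to H^3(X_{\overline{k}}, \Z_\ell(1))[\ell^\infty] \to 0. \qquad (\star)$$

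With these ingredients the equivalences and the bound $(5)$ follow formally. For $(3) \Leftrightarrow (4)$, I would use the identification $T_\ell(M_\ell^U) = T_\ell^U$ (inverse limits commute with $U$-invariants) together with the fact that $M_\ell^U$ is cofinitely generated, being a submodule of $M_\ell \cong (\Q_\ell/\Z_\ell)^{\mathrm{rk}\, T_\ell}$: a cofinitely generated $\Z_\ell$-module is finite iff its Tate module vanishes. Taking $U$-invariants in $(\star)$ and using the finiteness of $H^3(X_{\overline{k}}, \Z_\ell(1))[\ell^\infty]$ (the $\ell$-adic cohomology of a smooth proper variety is a finitely generated $\Z_\ell$-module) gives both $(2) \Leftrightarrow (3)$ and the bound $(5)$. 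For the classical equivalence $(1) \Leftrightarrow (2)$ between the Tate conjecture for divisors and finiteness of $\ell$-primary Brauer torsion, I would take $U$-invariants in $(\ddagger)$ and observe that $B^U[\ell^\infty]$ is finite iff the divisible part of $H^2(X_{\overline{k}}, \Q_\ell/\Z_\ell(1))^U$ comes entirely from $(NS(X_{\overline{k}}) \otimes \Q_\ell/\Z_\ell)^U$, which, after passing to Tate modules and using the torsion-freeness of $T_\ell$ (so that $NS(X_{\overline{k}}) \otimes \Z_\ell$ is saturated in $H^2(X_{\overline{k}}, \Z_\ell(1))$), is exactly the Tate conjecture.

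The main technical step is the snake-lemma diagram producing $(\star)$: one has to align the integrated Kummer sequence $(\dagger) \otimes \Q_\ell/\Z_\ell$ with the torsion-coefficient Kummer sequence $(\ddagger)$ through the middle map $H^2(X_{\overline{k}}, \Z_\ell(1)) \otimes \Q_\ell/\Z_\ell \hookrightarrow H^2(X_{\overline{k}}, \Q_\ell/\Z_\ell(1))$. Once this is in place, the remaining arguments reduce to elementary manipulations of cofinitely generated $\Z_\ell$-modules.
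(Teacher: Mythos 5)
Your derivation of the key exact sequence $0\to M_\ell\to B[\ell^\infty]\to H^3(X_{\overline k},\Z_\ell(1))[\ell^\infty]\to 0$ is correct and is essentially the paper's argument with the steps reordered (the paper runs the snake lemma at finite level and then takes colimits; you take limits first and then compare with the coefficient sequence). From it, the equivalence $(2)\Leftrightarrow(3)$ and the bound $(5)$ follow exactly as in the paper, and your proof of $(3)\Leftrightarrow(4)$ via $T_\ell(M_\ell^U)=T_\ell^U$ and the structure of cofinitely generated $\Z_\ell$-modules is a clean alternative to the paper's argument, which instead uses the coboundary $M_\ell^U\to \SH^1(U_\ell,T_\ell)$ and Serre's finiteness result for $\SH^1$ of compact $\ell$-adic Lie groups.

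The gap is in $(1)\Leftrightarrow(2)$, and specifically in $(1)\Rightarrow(2)$, which you dismiss in one sentence as "exactly the Tate conjecture" after passing to divisible parts. That step is not formal, because the Tate conjecture is a statement about Galois invariants of the total module $H^2(X_{\overline k},\Q_\ell(1))$, while finiteness of $B^U[\ell^\infty]$ is (by your own sequence $(\star)$) a statement about invariants of the quotient $M_\ell$; invariants do not pass to quotients. Concretely, let $U=\Z_\ell$ act on $H=\Z_\ell^2$ through $\left(\begin{smallmatrix}1&1\\0&1\end{smallmatrix}\right)$ and let $N=\Z_\ell e_1$ play the role of $NS\otimes\Z_\ell$: for every open subgroup $U'\subset U$ one has $(H\otimes\Q_\ell)^{U'}=N\otimes\Q_\ell$ and the divisible part of $(H\otimes\Q_\ell/\Z_\ell)^{U'}$ lies entirely in $N\otimes\Q_\ell/\Z_\ell$, so the "surjectivity onto invariants" condition holds; yet $M=(H/N)\otimes\Q_\ell/\Z_\ell\simeq\Q_\ell/\Z_\ell$ has infinite $U$-invariants. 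So neither your stated criterion on divisible parts nor the Tate-type surjectivity by itself bounds $B^U[\ell^\infty]$: one must rule out such non-split (unipotent) extensions. This is precisely where the paper invokes non-formal input: finite generation of $NS(X_{\overline k})$ (N\'eron, SGA6) to find $U_0$ acting trivially with stabilized invariants, and, crucially, Tate's Prop.\ (5.1) of \cite{TateMotives}, which gives a Galois-equivariant splitting of $0\to NS(X_{\overline k})\otimes\Q_\ell\to H^2(X_{\overline k},\Q_\ell(1))\to V_\ell\to 0$ (via the cup-product pairing restricted to the algebraic part); only with this splitting does the Tate conjecture force $V_\ell^U=0$, hence $T_\ell^U=0$. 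Your proposal contains no substitute for that splitting, so the implication $(1)\Rightarrow(2)/(4)$ remains unproved; the converse direction $(4)\Rightarrow(1)$ is indeed formal, as in the paper.
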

 
 \begin{proof} We first prove the equivalence of  $(2), (3)$ and $(4)$.  We can restrict to the case where $U$ is normal. More precisely, we show that for any given normal subgroup $U\subset   Gal(\overline k/k)$ the following are equivalent
 \begin{enumerate} 
 \setcounter{enumi}{1}
 \item  $B^{U}[\ell^\infty]$ is finite ;
 \item   $M_\ell^U $ is finite;
 \item   $T_\ell^{U}=0$,
 \end{enumerate}
 and, that, if this holds, one has  
\begin{enumerate}
\setcounter{enumi}{4}
\item  $\big|B^U[\ell^\infty]\big|\leq \big|M_\ell^U\big|+\big| \SH^3(X_{\overline{k}},\Z_\ell(1))[\ell^\infty]\big|.$
\end{enumerate}

For every positive integer $n$, the cohomology of the Kummer short exact sequence 
$$1\ra\mu_{\ell^n}\ra\mathbb{G}_m\ra\mathbb G_m\ra 1$$
induces an exact sequence 
$$ 0\ra NS(X_{\overline{k}})\otimes\Z/\ell^n\ra  \SH^2(X_{\overline{k}},\mu_{\ell^n})\ra B[\ell^n]\ra  0.$$
 Letting $n$ tend to $\infty$, we get a commutative diagram with exact lines and columns
 $$\xymatrix{ & 0\ar[d]& 0\ar[d]& 0\ar[d]&  \\
 0\ar[r]& \ell^n NS(X_{\overline{k}})\otimes\Z_\ell\ar[r]\ar[d]&  \ell^n \SH^2(X_{\overline{k}},\Z_\ell(1))\ar[r]\ar[d]&  K_\ell \ar[d]&   \\
 0\ar[r]& NS(X_{\overline{k}})\otimes\Z_\ell\ar[r]\ar[d]&  \SH^2(X_{\overline{k}},\Z_\ell(1))\ar[r]\ar[d]&  T_\ell\ar[r]\ar[d]&  0\\
 0\ar[r]& NS(X_{\overline{k}})\otimes\Z/\ell^n\ar[r]\ar[d]&  \SH^2(X_{\overline{k}},\mu_{\ell^n})\ar[r]\ar[d]&   B[\ell^n]\ar[r]\ar[d]&  0\\
 &0\ar[r]& \SH^3(X_{\overline{k}},\Z_\ell(1))[\ell^n]\ar[r]\ar[d]&C\ar[r]\ar[d]&0\\
 &&0 &0 & }$$
 whence a short exact sequence 
 $$0\rightarrow  T_\ell\otimes \Z/\ell^n\rightarrow B[\ell^n] \rightarrow  \SH^3(X_{\overline{k}},\Z_\ell(1))[\ell^n]\rightarrow 0.$$
 The exactness on the left follows from the snake, which ensures that $K_\ell=\ell^nT_\ell$.

Taking inductive limits and identifying $T_\ell\otimes \Z/\ell^n\simeq M[\ell^n]$, we obtain a short exact sequence 
$$
 0\rightarrow  M_\ell \rightarrow B[\ell^\infty] \rightarrow  \SH^3(X_{\overline{k}},\Z_\ell(1))[\ell^\infty]\rightarrow 0$$
and taking $U$-invariants, we obtain an exact sequence 
$$
(\ref{proposition:basic-bound}.6) \;\; 0\rightarrow  M_\ell^{U} \rightarrow B^U[\ell^\infty] \rightarrow  \SH^3(X_{\overline{k}},\Z_\ell(1))^U[\ell^\infty].
$$

This shows (2) $\Leftrightarrow$ (3) and that, in that case, (5) holds. For the equivalence  (3) $\Leftrightarrow$ (4), let $U_\ell\subset \SGL(T_\ell)$ denote the image of $U\subset Gal(\overline k/k)$ acting on $T_\ell$ and consider the exact sequence 
$$0\rightarrow T_\ell^U\rightarrow V_\ell^U\rightarrow M_\ell^U\stackrel{\delta}{\rightarrow} \SH^1(U_\ell,T_\ell).$$
Since $U_\ell$ is a compact $\ell$-adic Lie group, $ \SH^1(U_\ell,T_\ell)$ is a  topologically finitely generated $\Z_\ell$-module \cite[Prop. 9]{GCVA}. In particular, $\Sim(\delta)$ is finite. The conclusion follows from the fact that either $T_\ell^U=V_\ell^U=0$ 
or $V_\ell^U/T_\ell^U$ is infinite. 

Now, assume $X$ satisfies the $\ell$-adic Tate conjecture for divisors. Then, since $NS(X_{\overline k})$ is a finitely generated abelian group (\cite[p. 145, Thm. 2]{Neron}, \cite[XIII, 5.1]{SGA6}), we can find an open subgroup $U_0$ of $Gal(\overline k/k)$ such that the action of $U_0$ on $NS(X_{\overline k})$ is trivial, and the map 
 $$NS(X_{\overline k})\otimes\Q_\ell\ra \SH^2(X_{\overline{k}},\Q_\ell(1))^{ U}$$ 
 is onto for every open subgroup $U$ of $U_0$. Furthermore, \cite[Prop. (5.1)]{TateMotives} shows that as a consequence of the Tate conjecture for $X$, the exact sequence of $U$-modules
 $$0\ra NS(X_{\overline k})\otimes\Q_\ell\ra \SH^2(X_{\overline{k}},\Q_\ell(1))\ra V_\ell\ra 0$$
 is split, so that the surjectivity above implies $V_\ell^U=0.$ Since $T_\ell$ is a torsion-free $\Z_\ell$-module, this implies $T_\ell^U=0.$ Since this vanishing holds for any small enough open subgroup $U$ of $Gal(\overline k/k)$, it holds for any open subgroup $U$ of $Gal(\overline k/k)$. This shows  (1) $\Rightarrow$ (4). For the converse implication,  let $U_0\subset Gal(\overline k/k)$ be an open subgroup such that $$H^2(X_{\overline k}, \Q_\ell(1))^{U_0}=
 \bigcup_{[ Gal(\overline k/k):U]<\infty} H^2(X_{\overline k}, \Q_\ell(1))^U.$$
 Since $T_\ell^{U_0}=0$ implies $V_\ell^{U_0}=0$, we obtain an isomorphism  $$  (NS(X_{\overline{k}})\otimes\Q_\ell)^{U_0}\tilde{\rightarrow}H^2(X_{\overline k}, \Q_\ell(1))^{U_0} $$ hence, \textit{a fortiori},  $   NS(X_{\overline{k}})\otimes\Q_\ell\tilde{\rightarrow}H^2(X_{\overline k}, \Q_\ell(1))^{U_0}.$ 
 \end{proof}

\subsection{}\hspace{-0.3cm} With Proposition \ref{proposition:basic-bound} in mind, we offer the following question on uniform boundedness for Brauer groups, which we view as an enhancement of the $\ell$-adic Tate conjecture for divisors. Theorem \ref{theorem:main} settles the case where $S$ is one-dimensional for those points that satisfy the Tate conjecture.

\begin{question}\label{conjecture:boundedness-in-families}
Let $k$ be a finitely generated field of characteristic zero, and let $\ell$ be a prime. Let $S$ be a quasi-projective variety over $k$. Let $\pi : X\ra S$ be a smooth projective morphism. Then for any positive integer $d$, there exists a constant $C_d$ satisfying the following property: for any field extension $k\subset K\subset\overline k$ of degree $[K:k]\leq d$, and any $K$-point $s\in  S(K)$, we have 
$$|Br(X_{s, \overline k})^{Gal(\overline{k}/k)}[\ell^\infty]|\leq C_d.$$
\end{question}

For $d=1$, Question \ref{conjecture:boundedness-in-families} is closely related to (the   $\ell$-adic Tate conjecture for divisors and) the Bombieri-Lang conjecture (see e.g. \cite[Conj. 1.8]{Brunebarbe}). More precisely, assuming that the $X_s$, $s\in S(k)$ satisfies the  $\ell$-adic Tate conjecture for divisors, Question \ref{conjecture:boundedness-in-families} would follow from the following Galois-theoretic conjecture. For every integer $C\geq 1$ let $S^{\leq C}(k)$ be the set of all $s\in S(k)$ such that  the  image of $Gal(\overline{k}/k)$ acting on  $H^2(X_{\overline{s}},\Q_\ell)$ is  open of index at most $ C$ in the image of $\pi_1(S)$ and let $S^{\leq C, ex}(k)$ be the complement of $S^{\leq C}(k)$ in $S(k)$.  Then we have the following:

\begin{conjecture}\label{conjecture:Galois} There exists an integer $C\geq 1 $ such that  $S^{\leq C, ex}(k)$ is not Zariski-dense in $S$. 
\end{conjecture}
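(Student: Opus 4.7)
The plan is to reduce Conjecture \ref{conjecture:Galois} to a boundedness statement on certain \'etale covers of $S$. Fix a geometric base point, and let $\rho : \pi_1(S) \to \SAut(H^2(X_{\overline s}, \Q_\ell))$ denote the arithmetic $\ell$-adic monodromy representation, with geometric image $\Pi := \rho(\pi_1(S_{\overline k}))$. For each open subgroup $H \subset \rho(\pi_1(S))$ containing $\Pi$, the index $[\rho(\pi_1(S)) : H]$ is finite and $H$ cuts out a connected \'etale cover $\pi_H : S_H \to S$; a point $s \in S(k)$ lies in $S^{\leq C, ex}(k)$ exactly when it lifts to a $k$-point of some $S_H$ with $[\rho(\pi_1(S)) : H] > C$. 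Thus the conjecture amounts to showing that the union $\bigcup_H \pi_H(S_H(k))$, taken over subgroups of index greater than $C$, fails to be Zariski-dense in $S$.

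First I would organize the subgroups $H$ of index $> C$ into two regimes. The \emph{shallow} regime consists of those $H$ whose Zariski closure in the algebraic monodromy group of $\Pi$ is a proper reductive subgroup; by the theory of $\ell$-adic Lie groups these form finitely many conjugacy classes, and each class can be attacked individually by identifying $S_H$ with (or dominating it by) a sub-family cut out by a Hodge-locus-style construction, where the generic Mumford-Tate group drops. The \emph{deep} regime consists of $H$ whose Zariski closure equals that of $\Pi$, but which impose congruence conditions at high level; the associated covers $S_H$ form a tower that should become geometrically more complicated as the level grows. The strategy is to choose $C$ large enough that the shallow regime can be handled once and for all, and then to treat the deep regime uniformly in $H$.

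The main step --- and the principal obstacle --- is showing that in the deep regime, all but finitely many $S_H$ have non-Zariski-dense $k$-points. In dimension one, this is precisely what \cite{UOI2} achieves: one proves that the geometric genus of $S_H$ tends to infinity with the level and then invokes Faltings's theorem on rational points of curves of genus $\geq 2$. The natural higher-dimensional analogue is to show that deep covers $S_H$ are of general type (indeed, Kobayashi-hyperbolic), and to appeal to the Bombieri-Lang conjecture in order to conclude that $S_H(k)$ is not Zariski-dense. The general-type property is expected to follow from Griffiths-type positivity for the Hodge bundle restricted to the tower $\{S_H\}$, in the spirit of recent hyperbolicity results for moduli of polarized varieties (e.g.\ \cite{Brunebarbe}); but even granting this geometric input, one still needs Bombieri-Lang, which is itself conjectural, and one needs to apply it with sufficient uniformity in $H$ so that the union $\bigcup_H \pi_H(S_H(k))$ remains non-dense. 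This combination of an unproved geometric input with an unproved Diophantine input is exactly why Conjecture \ref{conjecture:Galois} is stated as a conjecture outside the one-dimensional case.
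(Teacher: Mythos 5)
You were asked to prove a statement that the paper itself does not prove: Conjecture \ref{conjecture:Galois} is stated as a conjecture, and the authors only record that (i) when $S$ is a curve it is a special case of the main theorem of \cite{UOI1}, and (ii) in general it would follow from the Bombieri--Lang conjecture once one knows that the \'etale covers $S_n\to S$ constructed in \cite[\S 3.1.2]{UOI1} (whose $k$-points map onto the sets $S^{\leq C_n, ex}(k)$) are of general type for $n\gg 0$, a property they relate to the hyperbolicity results of \cite{Brunebarbe}. Your proposal reproduces exactly this heuristic reduction, and you are candid that it is not a proof: both the general-type input and Bombieri--Lang are open, so nothing along these lines currently settles the case $\dim S\geq 2$. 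So as far as the comparison goes, you have rediscovered the paper's own discussion surrounding the conjecture, not supplied a proof of it; there is no complete argument here to certify, and the paper contains none either.

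There is, however, a concrete error in your group-theoretic setup which would have to be fixed even at the level of the reduction. You attach covers only to open subgroups $H\subset\rho(\pi_1(S))$ \emph{containing} the geometric image $\Pi=\rho(\pi_1(S_{\overline{k}}))$. For a $k$-rational point $s$, the section of $\pi_1(S)\to Gal(\overline{k}/k)$ determined by $s$ gives $\rho(\pi_1(s))\cdot\Pi=\rho(\pi_1(S))$; hence if $H\supseteq\Pi$ and $H\supseteq\rho(\pi_1(s))$ then $H=\rho(\pi_1(S))$. With your restriction, no $k$-point ever lifts to an $S_H$ of index $>C$, so your claimed characterization would force $S^{\leq C, ex}(k)=\emptyset$ for every $C$, which is false in general (and would make the conjecture vacuous). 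Moreover, covers attached to $H\supseteq\Pi$ are geometrically trivial, so they could never become of general type. The correct construction, as in \cite{UOI1}, uses arbitrary open subgroups (or rather a cofinal tower of them); the geometrically significant ones --- those responsible for genus growth in the curve case and for the expected hyperbolicity in general --- are precisely those for which $H\cap\Pi$ has large index in $\Pi$. A smaller point: the curve case of Conjecture \ref{conjecture:Galois} is deduced in the paper from \cite{UOI1}, while \cite{UOI2} is what is used for Theorem \ref{theorem:main}.
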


\noindent When $S$ is a curve, Conjecture \ref{conjecture:Galois} is a special case of the main result of \cite{UOI1}.  We explain the relation between the general case of Conjecture \ref{conjecture:Galois}  and the Bombieri-Lang conjecture. One can always construct a strictly increasing  sequence $C_n$, $n\geq 0$ of integers and a projective system 
$$\ldots\ra S_{n+1}\rightarrow S_{n}\ra S_{n-1}\ra\ldots$$
of \'etale covers of $S$ with the property that the image of $S_{n}(k)$ in $S$ is   $S^{ \leq C_n, ex}(k)$ (see \cite[\S 3.1.2]{UOI1}). So proving that $ S^{ \leq C, ex}(k)$ is not Zariski-dense in $S$ for $C$ large enough amounts  to proving that $S_{n}(k)$ is not Zariski-dense in $S_{n}$ for $n\gg 0$. According to the Bombieri-Lang conjecture, this would follow from the fact that  $S_{n}$ is of general type for $n\gg 0$. While the Bombieri-Lang conjecture is still completely open, proving that $S_{n}$ is of general type for $n\gg 0$ seems to be a more tractable problem. The projective system $(S_n)$ can indeed be regarded as a generalization of towers of Shimura varieties. In this setting, Brunebarbe recently obtained an asymptotic hyperbolicity result (implying the general type property)  -- see \cite{Brunebarbe} and the references therein, as well as \cite{AVA}. 



\bigskip

As a concrete example, we now explain how a positive answer to the question above in the case of families of $K3$ surfaces -- for which the Tate conjecture is known -- yields a positive answer to the $\ell$-primary version of Conjecture 4.6 of V\'arilly-Alvarado in \cite{Varilly17}.

\begin{proposition}\label{conjecture:Varilly}
Assume that Question \ref{conjecture:boundedness-in-families} has a positive answer for families of $K3$ surfaces. Let $d$ be a positive integer, and let $\Lambda$ be a lattice. Then there exists a positive constant $C_d$ such that for any degree $d$ number field $k$ with algebraic closure $\overline k$, and any $K3$ surface $X$ over $k$ with $NS(X_{\overline k})\simeq \Lambda$, the following inequality holds:
$$|Br(X_{\overline k})^{Gal(\overline k/k)}[\ell^\infty]|\leq C_d.$$
\end{proposition}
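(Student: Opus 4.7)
The plan is to reduce Proposition \ref{conjecture:Varilly} to Question \ref{conjecture:boundedness-in-families} by realizing every $K3$ surface $X/k$ with $NS(X_{\overline{k}})\simeq\Lambda$ as a fibre of a single smooth projective universal family $\pi:\mathcal{X}\to\mathcal{M}$ over a quasi-projective $\Q$-variety $\mathcal{M}$, at the cost of replacing $k$ by a field extension whose degree is bounded purely in terms of $\Lambda$. We may clearly assume $\Lambda$ has signature $(1,r-1)$ with $r\leq 20$, since otherwise no such $X$ exists and the statement is vacuous.

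For the geometric input, I would use the moduli space of $\Lambda$-polarized $K3$ surfaces in the sense of Dolgachev: it is a quasi-projective coarse moduli space over $\Q$, a finite disjoint union of orthogonal modular varieties attached to the transcendental lattice $\Lambda^{\perp}$ in the $K3$ lattice. Rigidifying by a level-$N$ structure for some auxiliary $N\geq 3$ (e.g.\ on the \'etale cohomology with $\Z/N$-coefficients) yields a fine moduli space $\mathcal{M}$, still quasi-projective over $\Q$, equipped with a smooth projective universal family $\pi:\mathcal{X}\to\mathcal{M}$. Given $X/k$ with $NS(X_{\overline{k}})\simeq\Lambda$, the Galois action on $NS(X_{\overline{k}})$ factors through a finite subgroup of $\textup{O}(\Lambda)$; passing to its fixed field $k_1$ provides an extension of degree at most $|\textup{O}(\Lambda)|$ over which one can choose a $\Lambda$-polarization of $X_{k_1}$ defined over $k_1$. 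A further extension $k'/k_1$, of degree bounded by the order of the finite group acting on the level-$N$ datum (which depends only on $\Lambda$ and $N$), trivializes the level structure. The total degree $D:=[k':k]$ is thus bounded by a constant depending only on $\Lambda$ (and $N$), and $X_{k'}$ is isomorphic, as a $k'$-scheme, to the fibre $\mathcal{X}_s$ at some $s\in\mathcal{M}(k')$.

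Applying the assumed positive answer to Question \ref{conjecture:boundedness-in-families} to $\pi:\mathcal{X}\to\mathcal{M}$ with parameter $Dd$ produces a constant $C_{Dd}$ such that
$$\big|Br(\mathcal{X}_{s,\overline{k}})^{Gal(\overline{k}/k')}[\ell^\infty]\big|\leq C_{Dd}.$$
Since $k'\supseteq k$ implies $Gal(\overline{k}/k')\subseteq Gal(\overline{k}/k)$, one has $Br(X_{\overline{k}})^{Gal(\overline{k}/k)}\subseteq Br(X_{\overline{k}})^{Gal(\overline{k}/k')}$, and setting $C_d:=C_{Dd}$ yields the proposition. The main obstacle in this plan is the geometric input: one needs a genuine quasi-projective \emph{fine} moduli space of $\Lambda$-polarized $K3$ surfaces over $\Q$ (rather than a stack or coarse moduli space), admitting an honest universal family, and one must bound uniformly in $X$ and $k$ the degree of the extension required to trivialize both the lattice structure on $NS(X_{\overline{k}})$ and an auxiliary level-$N$ structure. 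Both reduce to the finiteness of the orthogonal groups $\textup{O}(\Lambda)$ and of the relevant automorphism groups of the level-$N$ cohomology data, combined with standard constructions of moduli of $K3$ surfaces with level structure (e.g.\ following Rizov). Once these inputs are granted, the rest of the argument is essentially bookkeeping.
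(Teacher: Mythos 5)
Your overall reduction scheme is the same as the paper's: realize every $K3$ surface $X/k$ with $NS(X_{\overline k})\simeq\Lambda$, after a field extension of degree bounded in terms of $\Lambda$ only, as a fibre of one fixed smooth projective family over a quasi-projective $\Q$-base, then apply Question \ref{conjecture:boundedness-in-families} with parameter $Dd$ and use the inclusion of $Gal(\overline k/k)$-invariants into $Gal(\overline k/k')$-invariants. But the paper obtains the family much more cheaply than you propose (Lemma \ref{lemma:bounded-degree} and Proposition \ref{proposition:boundedness}): after a uniformly bounded extension it produces an ample line bundle of square bounded in terms of $\Lambda$ (via Lieblich--Maulik--Snowden), then invokes Koll\'ar--Matsusaka to embed all such surfaces in a fixed projective space with bounded degree, and takes the family carried by (an open subscheme of) the Hilbert scheme. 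This deliberately sidesteps the construction of lattice-polarized moduli spaces over $\Q$, which is exactly the step you flag as your ``main obstacle''.

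As written, your route has genuine gaps beyond that acknowledged obstacle. First, the bound $[k_1:k]\leq |\mathrm{O}(\Lambda)|$ is vacuous: for the hyperbolic lattices arising as N\'eron--Severi lattices of projective $K3$s, $\mathrm{O}(\Lambda)$ is in general infinite. What is true, and what the paper uses, is that the Galois image is a \emph{finite} subgroup of $GL_r(\Z)$ with $r\leq 22$, and by Minkowski's theorem such subgroups have uniformly bounded order, giving a bound on $[k_1:k]$ independent even of $\Lambda$. Second, trivializing the Galois action on $NS(X_{\overline k})$ does not by itself yield a polarization defined over $k_1$: a Galois-invariant class need not lie in the image of $Pic(X_{k_1})\ra NS(X_{\overline k})^{Gal(\overline k/k_1)}$, the obstruction living in $Br(k_1)$. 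The paper kills this obstruction with the degree-$24$ zero-cycle $c_2(X)$, which shows the cokernel is annihilated by $24$, and then replaces the class $\alpha$ by $24\alpha$; your sketch asserts the descent of the $\Lambda$-polarization without any such argument (and if you instead define polarizations as sections of the Picard scheme, you must check your fine moduli space represents precisely that functor). Third, the existence over $\Q$ of a quasi-projective \emph{fine} moduli scheme of $\Lambda$-polarized $K3$ surfaces with level-$N$ structure carrying a universal family is a substantial input: Dolgachev's construction is complex-analytic, and the arithmetic constructions (Rizov) concern polarized, not lattice-polarized, $K3$s and are stacks; so this step cannot be treated as bookkeeping. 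Each of these points is repairable, but the repairs essentially reproduce the paper's Lemma \ref{lemma:bounded-degree} and replace your moduli-theoretic input by the Hilbert-scheme construction of Proposition \ref{proposition:boundedness}.
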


Using theorem \ref{theorem:main}, the proposition above is a consequence of the following result.

\begin{proposition}\label{proposition:boundedness}
There exists a positive integer $N$ with the following property: let $\Lambda$ be a lattice. Then there exists a smooth projective family of $K3$ surfaces $\pi : \mathcal X\ra S$ over a quasi-projective base $S$ over $\Q$ such that given any field $k$ of characteristic zero with algebraic closure $\overline k$, and any $K3$ surface $X$ over $k$ with $NS(X_{\overline k})\simeq \Lambda$, there exists an extension $K/k$ of degree $N$, and a $K$-point $s$ of $S$ with $\mathcal X_s\simeq X_K$. 
\end{proposition}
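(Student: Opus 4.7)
The plan is to use Dolgachev's moduli of lattice-polarized K3 surfaces equipped with a universal level structure, exploiting Minkowski's lemma on torsion-free congruence subgroups of $GL_n(\Z)$ to obtain a bound $[K:k] \leq N$ independent of $\Lambda$. Fix an integer $N_0 \geq 3$ and set $N := |O(L_{K3} \otimes \Z/N_0)|$, where $L_{K3} := U^{\oplus 3} \oplus E_8(-1)^{\oplus 2}$ is the K3 lattice of rank $22$. This $N$ is the desired uniform constant, depending only on $N_0$, not on $\Lambda$.

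Given $\Lambda$, I would form the coarse moduli space $M_\Lambda$ of $\Lambda$-polarized K3 surfaces in the sense of Dolgachev, which is quasi-projective over $\Q$ by the work of Dolgachev, Maulik--Pandharipande and Madapusi Pera realizing it as an orthogonal Shimura variety. I then pass to the finite \'etale cover $S := M_\Lambda^{[N_0]}$ obtained by imposing a full level-$N_0$ structure on the second \'etale cohomology: Minkowski's lemma ensures (since $N_0 \geq 3$) that the kernel of $O(L_{K3}) \to O(L_{K3} \otimes \Z/N_0)$ is torsion-free, so $S$ is a fine moduli scheme and carries a smooth projective universal family $\pi : \mathcal{X} \to S$.

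Given a K3 surface $X/k$ with $NS(X_{\overline{k}}) \simeq \Lambda$, I would then choose over $\overline{k}$ a $\Lambda$-polarization $\phi : \Lambda \hookrightarrow NS(X_{\overline{k}})$ (matching ample chambers, which we may arrange after precomposing with an element of the Weyl group of $\Lambda$) and a level-$N_0$ structure $\alpha$ on $H^2(X_{\overline{k}}, \Z/N_0)$; together these data define an $\overline{k}$-point $s$ of $S$. The $Gal(\overline{k}/k)$-action on $\alpha$ factors through $O(L_{K3} \otimes \Z/N_0)$, so after a finite extension $K/k$ of degree at most $N$ the structure $\alpha$ becomes $Gal(\overline{k}/K)$-invariant, hence defined over $K$. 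The key point is that this $K$ already suffices for $\phi$ as well: over $K$, the image of $Gal(\overline{k}/K)$ in $O(NS(X_{\overline{k}}))$ is a finite subgroup acting trivially modulo $N_0$, hence trivial by a second application of Minkowski's lemma; consequently $\phi$ is automatically $K$-rational. Therefore $s \in S(K)$ and $\mathcal{X}_s \simeq X_K$ as required.

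The main technical input, and the main possible hurdle, is the existence of $M_\Lambda$ together with its fine level cover $S$ as a quasi-projective variety over $\Q$ (rather than only as a stack or only over $\C$), which is now standard thanks to the interpretation of K3 moduli as orthogonal Shimura varieties. Granting this, the construction is essentially formal: the two applications of Minkowski's lemma, once to rigidify the moduli and once to propagate $K$-rationality from $\alpha$ to $\phi$, are what produce the uniformity of $N$ in $\Lambda$.
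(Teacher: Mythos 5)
Your strategy is genuinely different from the paper's: instead of embedding the surfaces in a fixed projective space, you take for $S$ a level cover of a moduli space of $\Lambda$-polarized $K3$ surfaces, with the uniform constant $N=|O(L_{K3}\otimes\Z/N_0)|$ coming from Minkowski's lemma used twice -- once to rigidify the moduli problem, once to deduce that triviality of the Galois action mod $N_0$ forces triviality on $NS(X_{\overline k})$ (this second application is correct: the Galois image in $O(NS(X_{\overline k}))$ is finite and the reduction kernel is torsion-free for $N_0\geq 3$). The paper avoids moduli theory altogether: Minkowski-type finiteness of finite subgroups of $GL_{22}(\Z)$ gives a uniform $N$ trivializing the Galois action on $NS$, \cite[Lemma 2.3.2]{LMS14} gives an ample class on $X_{\overline k}$ of self-intersection bounded in terms of $\Lambda$ alone, the degree-$24$ zero-cycle $c_2(X)$ kills the Brauer obstruction so that the $24$th multiple of that class descends to an honest ample line bundle on $X_K$, and then Koll\'ar--Matsusaka plus the Hilbert scheme produce $\mathcal X\to S$.

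The caveat is that your argument rests on a black box that is doing most of the work and is not as off-the-shelf as you claim: you need, for every $\Lambda$, a quasi-projective \emph{scheme} over $\Q$ (not just a coarse space, a stack, or a space over $\C$) which is a fine moduli space of ample $\Lambda$-polarized $K3$ surfaces with level-$N_0$ structure and which carries a universal family that is \emph{projective} over the base. The standard arithmetic theory (Rizov, Madapusi Pera) covers quasi-polarized $K3$s of a fixed degree with level structure; Dolgachev's lattice-polarized theory is complex-analytic and coarse, and upgrading it to a fine quasi-projective model over $\Q$ with a universal family (defining $\Lambda$-polarizations of families via the relative Picard functor, representability, descent of the construction to $\Q$) requires genuine work, uniformly in $\Lambda$. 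Even granting representability, the tautological polarization is only a section of the relative Picard functor, so projectivity (as opposed to properness) of the universal family has a Brauer obstruction -- fixable by the same $c_2$-degree-$24$ trick the paper uses in Lemma \ref{lemma:bounded-degree}, but you need to say so. Two smaller points: the conclusion $\mathcal X_s\simeq X_K$ needs the rigidity argument spelled out (the comparison isomorphism over $\overline k$ is unique, hence Galois-equivariant, hence descends to $K$), and both your argument and the paper's produce an extension of degree dividing $N$ rather than exactly $N$, which is harmless. With these points addressed your route works, but it is considerably heavier than the paper's elementary construction via bounded polarization degree and Hilbert schemes.
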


We start with a lemma. If $L$ is a line bundle on a $K3$ surface $X$, write $c_1(L)$ for its class in $NS(X)\simeq \mathrm{Pic}(X)$.

\begin{lemme}\label{lemma:bounded-degree}
There exists a positive integer $N$ with the following property: let $\Lambda$ be a lattice. There exists a positive integer $\delta_\Lambda$, depending only on $\Lambda$, such that for any field $k$ with algebraic closure $\overline k$, and any $K3$ surface $X$ over $k$ with $NS(X_{\overline{k}})\simeq \Lambda$, there exists an extension $K/k$ of degree $N$ and an ample line bundle $L$ on $X_K$ with $c_1(L)^2=\delta_\Lambda.$
\end{lemme}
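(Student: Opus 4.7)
The plan combines a lattice-theoretic construction of $\delta_\Lambda$ with Minkowski's universal bound on finite subgroups of $GL_n(\mathbb{Z})$, followed by a Brauer-theoretic descent.

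First, I would define $\delta_\Lambda$ via the Weyl group action on the positive cone of $\Lambda_\mathbb{R}$. Let $W\subset O(\Lambda)$ be the Weyl group generated by reflections in $(-2)$-classes; it acts on the positive cone with chambers as fundamental domains, and the ample cone of any $K3$ surface $Y$ with $NS(Y_{\overline k})\simeq \Lambda$ is exactly one such chamber (in the appropriate component of the positive cone). Since $W$ acts by isometries, the minimum of $v^2$ over primitive lattice vectors of any given chamber is independent of the chamber; let $\delta_\Lambda$ denote this invariant of $\Lambda$. Every $K3$ surface with $NS\simeq \Lambda$ then carries a geometric ample class of self-intersection $\delta_\Lambda$.

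Second, to descend this class to a bounded-degree extension, I would apply Minkowski's theorem: every finite subgroup of $GL_n(\mathbb{Z})$ has order dividing a universal integer $M(n)$. The Galois group $Gal(\overline k/k)$ acts continuously on the discrete lattice $NS(X_{\overline k})\simeq \Lambda$ preserving the intersection form, so its image is a finite subgroup of $O(\Lambda)\subset GL_\rho(\mathbb{Z})$ with $\rho\leq 22$. The fixed field $K_1/k$ satisfies $[K_1:k]\mid M(22)$, and over $K_1$ the action on $NS(X_{\overline k})$ is trivial, so the class $h$ of square $\delta_\Lambda$ in the ample cone may be chosen $Gal(\overline k/K_1)$-invariant.

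Third, to realize $h$ as the first Chern class of a line bundle over some bounded-degree extension $K$ of $K_1$, I would use the low-degree exact sequence arising from Hochschild--Serre for $\mathbb{G}_m$ on $X_{K_1}$:
$$0\to \mathrm{Pic}(X_{K_1})\to \mathrm{Pic}(X_{\overline k})^{Gal(\overline k/K_1)}\xrightarrow{\partial} \mathrm{Br}(K_1)\to \mathrm{Br}(X_{K_1}).$$
For $K3$ surfaces, $\mathrm{Pic}(X_{\overline k})=NS(X_{\overline k})$, and the obstruction $\partial(h)\in \mathrm{Br}(K_1)$ lies in the kernel of $\mathrm{Br}(K_1)\to \mathrm{Br}(X_{K_1})$. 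Since $X$ is projective over $k$, an ample line bundle $L_0$ defined over $k$ gives a zero-cycle on $X_{K_1}$ whose degree controls the period of $\partial(h)$, and a further extension $K/K_1$ of bounded degree kills it.

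The main obstacle is to arrange that the total degree $[K:k]$ be bounded by a truly universal integer $N$ independent of $\Lambda$: the Minkowski step gives universality for the Galois-trivialization, but handling the Brauer obstruction uniformly is the delicate point. A likely strategy is to refine the choice of $\delta_\Lambda$ (taking it as a suitable small multiple of the minimum chamber square) so as to ensure that $\partial(h)$ has universally-bounded period, making the final extension degree depend only on universal constants and so producing the desired $N$.
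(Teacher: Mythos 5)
Your first two steps are sound and essentially match the paper: the paper likewise trivializes the Galois action on $NS(X_{\overline k})$ using the finiteness of finite subgroups of $GL_{22}(\Z)$ (your Minkowski bound), and your Weyl-chamber construction of a geometric ample class of square depending only on $\Lambda$ is a reasonable substitute for the paper's citation of \cite[Lemma 2.3.2]{LMS14}. The gap is exactly where you flag it, and it is not a minor refinement but the key point of the argument. Your proposal to control the obstruction $\partial(h)\in Br(K_1)$ via a zero-cycle coming from an ample line bundle $L_0$ defined over $k$ fails to be uniform: the degree of that zero-cycle is $L_0^2$, which is bounded neither universally nor in terms of $\Lambda$, so it gives no universal bound on the period of $\partial(h)$. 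Moreover, even granting a bounded period, your plan to ``kill it by a further extension of bounded degree'' is problematic, since over general fields the period of a Brauer class does not bound its index, i.e.\ the degree of a splitting field.

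The missing idea in the paper is that every $K3$ surface carries a \emph{canonical} zero-cycle of universally bounded degree, namely $c_2(X)$, which has degree $24$ and is defined over the base field. Pushforward along it gives a map $Br(X)\ra Br(K)$ with $Br(K)\ra Br(X)\ra Br(K)$ equal to multiplication by $24$, so the cokernel of $NS(X_K)\ra NS(X_{\overline k})^{Gal(\overline k/K)}$ in the Hochschild--Serre sequence is killed by $24$. One then does not split the Brauer class at all: one replaces the invariant class $\alpha$ of square $C_\Lambda$ by $24\alpha$, which lifts to a line bundle $L$ on $X_K$ with $c_1(L)^2=24^2C_\Lambda=:\delta_\Lambda$, ample because its pullback is $\overline L^{\otimes 24}$. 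Thus no extension beyond the Galois-trivializing one is needed, and $N$ is universal. Your closing suggestion (``take a suitable small multiple of the minimal chamber square'') points in this direction, but without identifying a universal zero-cycle such as $c_2(X)$ of degree $24$ it does not yield the required uniformity, so as written the proof is incomplete at its decisive step.
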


\begin{proof}
Given any integer $n$, there exist only finitely many finite groups that can occur as subgroups of $GL_n(\Z)$. Since the continuous action of any profinite group on a discrete finitely generated abelian group factors through a finite quotient, and since the Néron-Severi group of any $K3$ surface is free of rank at most $22$, this shows that there exist finitely many finite groups that can occur as the image of $Gal(\overline k/k)\ra NS(X_{\overline k})$ for a $K3$ surface $X$ over a field $k$ with algebraic closure $\overline k$. 

In particular, there exists an integer $N$, independent of $X$ and $k$, with the following property: for any $k, \overline k$ and $X$ as above, there exists a Galois extension $k\subset K\subset \overline k$ of degree at most $N$ such that the action of $Gal(\overline k/k)$ on $NS(X_{\overline k})$ factors through $Gal(K/k)$.

\bigskip

Let $X$ be a $K3$ surface over a field $k$ with algebraic closure $k$, such that $NS(X_{\overline{k}})\simeq \Lambda$. By \cite[Lemma 2.3.2]{LMS14}, there exists a constant $C_\Lambda$, depending only on $\Lambda$, such that there exists an ample line bundle $\overline L$ on $X_{\overline k}$ with $c_1(\overline L)^2=C_\Lambda$. Write $\alpha=c_1(\overline L)$. Then, $N$ being as above, we can find a Galois extension $K$ of $k$ of degree $N$ such that $\alpha$ is $Gal(\overline{k}/K)$-invariant. 

Consider the exact sequence  coming from the Hochschild-Serre spectral sequence and Hilbert 90
$$0\ra NS(X_K)\ra NS(X_{\overline{k}})^{Gal(\overline{k}/K)}\ra Br(K)\ra Br(X).$$
The zero-cycle $c_2(X)$ on $X$ has degree $24$, and induces a morphism $Br(X)\ra Br(K)$ such that the composition $Br(K)\ra Br(X)\ra Br(K)$ is multiplication by $24$. This shows that the cokernel of the map $NS(X)\ra NS(X_{\overline{k}})^{Gal(\overline{k}/K)}$ is killed by $24$. As a consequence, we can find a line bundle $L$ on $X_K$ with $c_1(L)=24\alpha$. 

The pullback of $L$ to $X_{\overline k}$ is $\overline{L}^{\otimes 24}$, which is ample. Since ampleness is a geometric property, $L$ itself is ample. We have $c_1(L)^2=24^2C_\Lambda=:\delta_\Lambda$, which only depends on $\Lambda$.
\end{proof}

\begin{proof}[Proof of Proposition \ref{proposition:boundedness}] Apply the previous lemma to find an integer $N$ such that 
for $X$ as in the proposition, there exists an extension $K$ of $k$ of degree $N$, and an ample line bundle $L$ on $X_K$ with $c_1(L)^2=\delta $, where   $\delta $ only depends on $\Lambda$.

By Koll\'ar-Matsusaka's refinement of Matsusaka's big theorem \cite{KM}, and since the canonical bundle of a $K3$ surface is trivial by definition, we can find integers $i$ and $r$, depending only on $\Lambda$, such that $L^{\otimes i}$ is very ample, and induces an embedding of $X$ into a projective space of dimension at most $r$ as a subvariety of degree at most $\delta  i^2$. The existence of the Hilbert scheme (or the theory of Chow forms) allows us to conclude.
\end{proof}

\section{Proof of Theorem \ref{theorem:main} and its unconditional variant} \label{section:proof-main} 

Let $k$ be a finitely generated field of characteristic $0$.

\subsection{}\hspace{-0.3cm}Let $S$ be a smooth, separated and geometrically connected variety over $k$, with generic point $\eta$. If $s$ is a point of $S$, we will always write $\overline s$ for a geometric point lying above $s$.

\bigskip

Let $\pi : X\ra S$ be a smooth, proper morphism.   For every prime $\ell$, let $$\rho_\ell:\pi_1(S)\rightarrow \SGL(\SH^2(X_{\overline{\eta}},\Q_\ell(1)))$$
denote the corresponding representation\footnote{The choice of base points for \'etale fundamental groups will play no part in the sequel; we ignore such choices systematically in the following and do not mention base points in the notation.}. Let $K$ be a field, and let $s$ be a $K$-point of $S$. We write $\pi(s)$ for the absolute Galois group of $K$ considered with its natural map to $\pi_1(S)$, and we write also $\rho_\ell$ for the representation of $\pi_1(s)$ obtained by composing the above with $\pi_1(s)\ra \pi_1(S)$.

Let $S_{\ell}^{gen}$ denote the set of all points $s$ of $S$ with value in a field such that $\rho_\ell(\pi_1(s))$ is an open subgroup of $\rho_\ell(\pi_1(S))$. Let $S_\ell^{ex}$ be the complement of $S_\ell^{gen}$. In other words, $S_\ell^{gen}$ is the set of points $s$ such that the Zariski closures of $\rho_\ell(\pi_1(s))$ and $\rho_\ell(\pi_1(S))$ have the same neutral component. Note that the generic point $\eta$ always belongs to $S_\ell^{gen}.$

\subsection{}\hspace{-0.3cm}We start with the following proposition, which will allow us to compare the Galois-invariant part of Brauer groups of points in $S_{\ell}^{gen}$.

 \begin{proposition}\label{proposition:lifting-Tate}
For every $s\in S_{\ell}^{gen}$,   the natural specialization maps
$NS(X_{\overline\eta})\ra NS(X_{\overline s})$
and
$Br(X_{\overline\eta})\ra Br(X_{\overline s})$
are isomorphisms.   In particular, if $X_s$ satisfies the $\ell$-adic Tate conjecture for divisors then $X_{s'}$ satisfies the $\ell$-adic Tate conjecture for divisors for all $s'\in S_{\ell}^{gen}$.   
\end{proposition}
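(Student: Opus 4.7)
The plan is to reduce the statement, via smooth and proper base change applied to the Kummer exact sequences, to showing that the specialization map on Néron--Severi groups is an isomorphism for $s\in S_\ell^{gen}$, and to establish this via a finite-orbit argument on the relative Picard scheme that translates the $S_\ell^{gen}$ hypothesis. More precisely, smooth and proper base change gives, for every $i,j$, a canonical $\pi_1(s)$-equivariant isomorphism $\SH^i(X_{\overline\eta},\Z_\ell(j))\cong \SH^i(X_{\overline s},\Z_\ell(j))$ (where $\pi_1(s)$ acts on the left via $\pi_1(s)\to \pi_1(S)$). Using the Kummer sequences
$$0\to NS(X_{\overline t})\otimes\Z_\ell \to \SH^2(X_{\overline t},\Z_\ell(1))\to T_\ell(Br(X_{\overline t}))\to 0\qquad (t\in\{\eta, s\}),$$
together with the analogous sequences with $\mu_{p^n}$-coefficients for $p\ne\ell$, the $5$-lemma shows that the Brauer-group iso follows from the NS iso; so it suffices to show $NS(X_{\overline\eta})\to NS(X_{\overline s})$ is an isomorphism of abelian groups.

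Injectivity with torsion-free cokernel is classical: a line bundle on $X_{\overline\eta}$ spreads to an étale neighborhood of $S$ and specializes, while the cycle class map $NS\otimes\Q_\ell\hookrightarrow \SH^2(\cdot,\Q_\ell(1))$ (whose target is iso under base change) controls algebraic equivalence. For surjectivity, let $\alpha\in NS(X_{\overline s})$. Since $\pi_1(s)$ acts on the finitely generated abelian group $NS(X_{\overline s})$ through a finite quotient, $\alpha$ is fixed by some open subgroup $U\subseteq \pi_1(s)$; by the $S_\ell^{gen}$ hypothesis, $\rho_\ell(U)$ is open in $\rho_\ell(\pi_1(S))$, so under the base change identification, $c_1(\alpha)\in \SH^2(X_{\overline\eta},\Q_\ell(1))$ has finite $\pi_1(S)$-orbit. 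Equivalently, the connected component of $\mathrm{Pic}_{X/S,\overline s}$ containing a line bundle representing $\alpha$ has finite $\pi_1(S)$-orbit among components of the geometric generic Picard. After replacing $S$ by a connected finite étale cover that trivializes this orbit, this component is $\pi_1(S)$-invariant and, by representability of the relative Picard functor, extends to a component of $\mathrm{Pic}_{X/S}$ defined over $S$; pulling back to the geometric generic fiber yields a class in $NS(X_{\overline\eta})$ specializing to $\alpha$.

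I expect the main obstacle to be this last step: rigorously passing from finiteness of the $\pi_1(S)$-orbit of the Picard component containing $\alpha$ to the existence of a literal representative line bundle on $X_{\overline\eta}$ specializing to $\alpha$. This is essentially a rigidity statement reflecting that the $S_\ell^{gen}$ hypothesis excludes Noether--Lefschetz-type jumps of the geometric Picard rank at $s$; its clean execution depends on careful use of the algebraic-space structure of the relative Picard functor and compatibility of specialization with the component sheaf of $\mathrm{Pic}_{X/S}$.
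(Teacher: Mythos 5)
Your reduction of the Brauer statement to the N\'eron--Severi statement via the Kummer sequences and smooth proper base change, and the injectivity of the specialization map, are exactly as in the paper. The genuine gap is the surjectivity step, and it is precisely the point you flag at the end: finiteness of the $\pi_1(S)$-orbit of $c_1(\alpha)$ does not, by any representability argument on $\mathrm{Pic}_{X/S}$, produce a line bundle on $X_{\overline\eta}$ specializing to (a multiple of) $\alpha$. Two concrete problems. First, your ``equivalently'' sentence already conflates connected components of the fibre $\mathrm{Pic}_{X_{\overline s}}$ with components of the geometric generic Picard; the identification of these two component sets is essentially the statement being proved, so the translation is circular. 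Second, the final step fails as stated: the connected component of $\mathrm{Pic}_{X/S}$ through the point of the fibre over $\overline s$ given by $\alpha$ need not dominate $S$ at all. The obstruction to deforming a line bundle on $X_{\overline s}$ along $S$ lies in $H^2(X_{\overline s},\mathcal O_{X_{\overline s}})$ and is in general nonzero (Noether--Lefschetz jumping): for a family of quartic $K3$ surfaces whose special member contains a line $L$ while the generic member has Picard rank one, the component of $\mathrm{Pic}_{X/S}$ through $\mathcal O(L)$ is an isolated point lying over $s$, so ``extends to a component of $\mathrm{Pic}_{X/S}$ defined over $S$'' yields nothing over $\eta$. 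The hypothesis $s\in S_\ell^{gen}$ has to enter through a genuinely global argument, not through the formal structure of the relative Picard functor.

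The paper fills exactly this gap with Hodge theory: since $s\in S_\ell^{gen}$, the stabilizer of $c_1(L)$ contains a subgroup whose image is open in $\rho_\ell(\pi_1(S_{\overline k}))$, hence of finite index in the topological monodromy; after passing to the corresponding finite \'etale cover $S'_\C$ one chooses a smooth compactification $\overline X_{S'_\C}$ of the total space, and Deligne's theorem of the fixed part together with the degeneration of the Leray spectral sequence shows that $H^2(X_{\overline s},\Q(1))^{\pi_1(S'_\C)}$ is a sub-Hodge structure onto which $H^2(\overline X_{S'_\C},\Q(1))$ surjects; semisimplicity of polarized Hodge structures lifts $c_1(L)$ to a Hodge class on $\overline X_{S'_\C}$, and the Lefschetz $(1,1)$ theorem makes that lift algebraic, so a multiple of $[L]$ lifts to $NS(X_{\overline\eta})$. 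Surjectivity on the nose is then recovered from this ``torsion cokernel'' statement by using, for every prime $\ell'$, that the cokernel of $NS\otimes\Z_{\ell'}\to H^2(\cdot,\Z_{\ell'}(1))$ is the torsion-free module $T_{\ell'}(Br)$. Some input of this kind (theorem of the fixed part plus Lefschetz $(1,1)$, i.e.\ the variational Hodge conjecture for divisors) is unavoidable here; the finite-orbit condition alone, combined with representability of $\mathrm{Pic}_{X/S}$, does not exclude a jump of the Picard rank at $s$.
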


\begin{proof}
For every prime $\ell'$, consider the commutative specialization diagram, with exact rows
$$(\ref{proposition:lifting-Tate}.1)\;\; \xymatrix{
0\ar[r] & NS(X_{\overline\eta})\otimes\Z_{\ell'}\ar[r]\ar[d]&  \SH^2(X_{\overline\eta},\Z_{\ell'}(1))\ar[d]\ar[r] & T_{\ell'}(Br(X_{\overline\eta}))\ar[d]\ar[r] & 0\\
0\ar[r] & NS(X_{\overline{s}})\otimes\Z_{\ell'}\ar[r] &  \SH^2(X_{\overline{s}},\Z_{\ell'}(1))\ar[r]& T_{\ell'}(Br(X_{\overline{s}}))\ar[r] & 0
.}$$
By the smooth base change theorem, the middle vertical map is an isomorphism, so that the leftmost vertical map is injective, and the rightmost vertical map is surjective. Since this holds for every $\ell'$, the specialization map $NS(X_{\overline\eta})\ra NS(X_{\overline s})$ is injective.

 Let $U$ be an open subgroup of $\rho_\ell(\pi_1(s))$ acting trivially on $NS(X_{\overline{s}})$, and such that $NS(X_{\overline{s}})\otimes\Z_\ell$ injects into $\SH^2(X_{\overline{s}},\Z_\ell(1))^U$. Since $s\in S_\ell^{gen}$, $U$ is open in $\rho_\ell(\pi_1(S))$ hence (\ref{proposition:lifting-Tate}.2) $U\cap \rho_\ell(\pi_1(S_{\overline{k}}))$ is  open in $\rho_\ell(\pi_1(S_{\overline{k}}))$.

Choose an embedding $\overline k\subset\C$. We may assume  $\overline s\in S(\C)$. Let   $[L]\in NS(X_{\overline{s}})$, where $L$ is a line bundle on $X_{\overline{s}}$. From (\ref{proposition:lifting-Tate}.2) and comparison between the analytic and \'etale sites, the image $c_1(L)$ of $[L]$ in $H^2(X_{\overline{s}}, \Q(1))$ is invariant under a finite index subgroup   of $\pi_1(S_{\C})$ corresponding to a finite \'etale cover $S'_\C$ of $S_\C$. Fix a smooth compactification $\overline{X}_{S'_\C}$ of $X_{S'_\C}$. The theorem of the fixed part and the degeneration of the Leray spectral sequence show that $H^2(X_{\overline{s}}, \Q(1))^{\pi_1(S'_\C)}$ is a sub-Hodge structure of $H^2(X_{\overline{s}}, \Q(1))$ onto which the Hodge structure $H^2(\overline{X}_{S'_\C}, \Q(1))$ surjects. Semisimplicity of the category of polarized Hodge structures ensures that $c_1(L)$ can be lifted to a Hodge class in $H^2(\overline{X}_{S'_\C}, \Q(1))$. By the Lefschetz $(1,1)$ theorem, this implies that some multiple of $[L]$ can be lifted to an element of $NS(X_{\overline\eta})$.
As a consequence, the specialization map
$$NS(X_{\overline\eta})\ra NS(X_{\overline s})$$
has torsion cokernel. Since for any prime number $\ell'$, both cycle class maps to $\ell'$-adic cohomology have torsion-free cokernel -- as the Tate modules are torsion-free -- the specialization map is surjective, so it is an isomorphism. 

Finally, for every prime  $\ell'$ and  positive integer $n$, consider the specialization diagram 
$$\xymatrix{
0\ar[r] & NS(X_{\overline\eta})\otimes\Z/\ell'^n\Z\ar[r]\ar[d]&  \SH^2(X_{\overline\eta},\mu_{\ell'^n})\ar[d]\ar[r] & Br(X_{\overline\eta})[\ell's^n]\ar[d]\ar[r] & 0\\
0\ar[r] & NS(X_{\overline{s}})\otimes\Z/\ell'^n\Z\ar[r] &  \SH^2(X_{\overline{s}},\mu_{\ell'^n})\ar[r]& Br(X_{\overline{s}})[\ell'^n]\ar[r] & 0
.}$$
Both the leftmost and the middle vertical maps are isomorphisms, so the rightmost one is an isomorphism as well. Since the Brauer groups are torsion, this proves that the specialization map on Brauer groups is an isomorphism.  This concludes the proof of the first part of the assertion in Lemma \ref{proposition:lifting-Tate}. The second part of the assertion is a formal consequence of the first.
\end{proof}

\subsection{}\hspace{-0.3cm}For every open normal subgroup $U\subset \rho_\ell(\pi_1(S))$, let $S_U\subset S$ denote the set of all points $s$ with value in a field such that $\rho_\ell(\pi_1(s))\supset U$.  The set $S_U$  is well-defined since $\rho_\ell(\pi_1(s))$ is well-defined up to conjugation by an element of $\rho_\ell(\pi_1(S))$ and $U$ is normal in $\rho_\ell(\pi_1(S))$.  By definition,  $S_U\subset S_{\ell}^{gen}$ and $S_{\ell}^{gen}$ is the union of the $S_U$ as $U$ runs through all normal open subgroups of $\rho_\ell(\pi_1(S))$.\\

\begin{proposition}\label{proposition:bound-fixed-U}
Let $U\subset \rho_\ell(\pi_1(S))$ be a normal open subgroup.
\begin{enumerate}
\item If there exists $s_0\in S_{\ell}^{gen}$ such that the variety $X_{s_0}$ satisfies the $\ell$-adic Tate conjecture for divisors, then for every $s\in S_U$, there exists a constant $C_U\geq 1$ such that
$$|Br(X_{\overline s})^{\pi_1(s)}[\ell^\infty]|\leq C_U.$$
 \item  Assume that the Zariski-closure $G_\ell$ of $\rho_\ell(\pi_1(S))$ is connected.  Then there exists a constant $C_U\geq 1$ such that for every $s\in S_U$
$$[Br(X_{\overline s})^{\pi_1(s)}[\ell^\infty]:Br(X_{\overline\eta})^{\pi_1(S)}[\ell^\infty]]\leq C_U.$$
\end{enumerate}
\end{proposition}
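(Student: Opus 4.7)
The plan is to reduce both parts to controlling invariants of $M_\ell := T_\ell\otimes\Q_\ell/\Z_\ell$, where $T_\ell = T_\ell(Br(X_{\overline\eta}))$, and then to exploit the specialization isomorphism from Proposition \ref{proposition:lifting-Tate}. Since $S_U\subset S_\ell^{gen}$, Proposition \ref{proposition:lifting-Tate} identifies $Br(X_{\overline s})\simeq Br(X_{\overline\eta})$, compatibly with the action of $\pi_1(s)$ on the left and that of $\rho_\ell(\pi_1(s))\subset \rho_\ell(\pi_1(S))$ on the right; smooth proper base change similarly identifies $\SH^3(X_{\overline s},\Z_\ell(1))[\ell^\infty]\simeq \SH^3(X_{\overline\eta},\Z_\ell(1))[\ell^\infty]$, of cardinality some fixed constant $h$. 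The exact sequence
$$0\rightarrow M_\ell^V\rightarrow Br(X_{\overline\eta})^V[\ell^\infty]\rightarrow \SH^3(X_{\overline\eta},\Z_\ell(1))^V[\ell^\infty]$$
from the proof of Proposition \ref{proposition:basic-bound}, valid for every closed subgroup $V\subset \pi_1(S)$, yields both the absolute bound $|Br(X_{\overline\eta})^V[\ell^\infty]|\leq h\cdot|M_\ell^V|$ and, for $V'\subset V$, the relative bound $[Br^V[\ell^\infty]:Br^{V'}[\ell^\infty]]\leq h\cdot [M_\ell^V:M_\ell^{V'}]$, via the snake lemma applied to the corresponding short exact sequences $0\rightarrow M_\ell^W\rightarrow Br^W[\ell^\infty]\rightarrow I_W\rightarrow 0$ (with $I_W\subset \SH^3[\ell^\infty]$) and injective vertical maps. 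Finally, since $U\subset \rho_\ell(\pi_1(s))$ for $s\in S_U$, one has $M_\ell^{\pi_1(s)}\subset M_\ell^U$.

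For part (1), Proposition \ref{proposition:lifting-Tate} propagates the $\ell$-adic Tate conjecture from $X_{s_0}$ to $X_\eta$, and then Proposition \ref{proposition:basic-bound} gives $|M_\ell^U|<\infty$, depending only on $U$. Combining $M_\ell^{\pi_1(s)}\subset M_\ell^U$ with the absolute bound above yields $|Br(X_{\overline s})^{\pi_1(s)}[\ell^\infty]|\leq h\cdot|M_\ell^U|$, which provides the desired $C_U$.

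For part (2), without the Tate hypothesis $|M_\ell^U|$ may be infinite, and the task becomes to bound the index $[M_\ell^{\pi_1(s)}:M_\ell^{\pi_1(S)}]$ uniformly in $s$. The decisive input is connectedness of $G_\ell$: for every subgroup $H\subset \pi_1(S)$ whose image $\rho_\ell(H)$ is Zariski-dense in $G_\ell$, one has $V_\ell^H=V_\ell^{G_\ell}$, since the fixed locus of an algebraic representation on a vector space is cut out by a closed subscheme of $G_\ell$. This applies to $\pi_1(S)$ by definition of $G_\ell$, to $\pi_1(s)$ because $\rho_\ell(\pi_1(s))\supset U$, and to $U$ itself, because an open (hence finite-index) subgroup of the Zariski-dense compact group $\rho_\ell(\pi_1(S))$ is Zariski-dense in the connected $G_\ell$ (its Zariski closure has finite index, hence full dimension, hence equals $G_\ell$). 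Let $D := V_\ell^{G_\ell}/T_\ell^{G_\ell}$, viewed as a subgroup of $M_\ell$; then $D$ is the maximal divisible subgroup of each of $M_\ell^{\pi_1(S)}$, $M_\ell^{\pi_1(s)}$ and $M_\ell^U$. By the long exact sequence attached to $0\rightarrow T_\ell\rightarrow V_\ell\rightarrow M_\ell\rightarrow 0$, the quotient $M_\ell^U/D$ embeds into $\SH^1(U_\ell,T_\ell)$, which is a finitely generated $\Z_\ell$-module by the fact recalled in the proof of Proposition \ref{proposition:basic-bound}; being $\ell$-primary torsion, $M_\ell^U/D$ is therefore finite, with cardinality depending only on $U$. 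The chain $D\subset M_\ell^{\pi_1(S)}\subset M_\ell^{\pi_1(s)}\subset M_\ell^U$ then yields $[M_\ell^{\pi_1(s)}:M_\ell^{\pi_1(S)}]\leq |M_\ell^U/D|$, giving the desired uniform bound.

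The main obstacle is part (2): without the Tate conjecture, absolute cardinalities are inaccessible, and one must really bound a difference. The leverage comes entirely from connectedness of $G_\ell$, which forces the divisible part of every $M_\ell^H$ (for $H$ with Zariski-dense image) to be the same fixed group $D$, so that all variation between such subgroups is captured in the fixed finite quotient $M_\ell^U/D$, controlled uniformly in $s$.
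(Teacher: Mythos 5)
Your proposal is correct and follows essentially the same route as the paper: part (1) combines the specialization isomorphisms and Tate-conjecture propagation of Proposition \ref{proposition:lifting-Tate} with the finiteness statement of Proposition \ref{proposition:basic-bound}, and part (2) rests on exactly the two inputs of the paper's Lemma \ref{lemme:formal}, namely that connectedness of $G_\ell$ forces $V_\ell^{U}=V_\ell^{\rho_\ell(\pi_1(S))}$ and that the torsion of $H^1(U,T_\ell)$ is finite for a compact $\ell$-adic Lie group $U$. Your packaging via the common divisible subgroup $D=V_\ell^{G_\ell}/T_\ell^{G_\ell}$ and the chain $D\subset M_\ell^{\pi_1(S)}\subset M_\ell^{\pi_1(s)}\subset M_\ell^{U}$ is just a mild reorganization of that lemma, not a different argument.
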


\begin{proof} Write $B=Br(X_{\overline\eta})$, $T_\ell=T_\ell(B)$, $V_\ell=T_\ell\otimes\Q_\ell$ and $M_\ell=V_\ell/T_\ell$.\\

\noindent By the second part of Proposition \ref{proposition:lifting-Tate} and Proposition \ref{proposition:basic-bound}, $B^U[\ell^\infty]$ is finite. By the first part of Proposition \ref{proposition:lifting-Tate}, the specialization map $B\ra Br(X_{\overline s})$ is an isomorphism, which is obviously Galois-equivariant. In particular, since $\rho_\ell(\pi_1(s))$ contains $U$, we have a natural injection
$$Br(X_{\overline s})^{\pi_1(s)}[\ell^\infty]\hookrightarrow B^U[\ell^\infty].$$
This shows (1). \\

\noindent Let $s\in S_U$. From the exact sequence (\ref{proposition:basic-bound}.6), we find
$$0\rightarrow M_\ell^{\pi_1(s)}/M_\ell^{\pi_1(S)}\rightarrow B^{\pi_1(s)}[\ell^\infty]/B^{\pi_1(S)}[\ell^\infty]\rightarrow Q\rightarrow 0,$$
where $Q$ is a subquotient of $\SH^3(X_{\overline{\eta}},\Z_\ell(1))[\ell^\infty]$ and $M_\ell^{\pi_1(s)}/M_\ell^{\pi_1(S)}\hookrightarrow M_\ell^U/M_\ell^{\pi_1(S)}$ by definition of $S_U$. In particular, (2) follows from Lemma \ref{lemme:formal} below. \end{proof}

\noindent  Let $T $ be a free $\Z_\ell$-module of rank $r$ and let $\Pi$ be a closed subgroup of $\SGL(T)$. Write again $V=T\otimes\Q_\ell$ and $M=T\otimes\Q_\ell/\Z_\ell\simeq V/T$. For any positive integer $n$, we can identify $M[\ell^n]$ with $T\otimes\Z/\ell^n\Z.$ Let $G_\ell$ denote the Zariski closure of $\Pi$ in $\SGL(V)$.

 \begin{lemme}\label{lemme:formal} Assume that $G$ is connected\footnote{This assumption is necessary, as can be shown by considering the $r=1$ case with $\Pi$ acting through a finite quotient and $U$ being trivial.}. Then, for every normal open subgroup $U\subset \Pi$,  the index of $M^\Pi$ in $M^U$ is finite. 
 \end{lemme}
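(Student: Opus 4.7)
The plan is to exploit the short exact sequence $0 \to T \to V \to M \to 0$ of $\Pi$-modules. Connectedness of $G_\ell$ should force $V^U = V^\Pi$; after that, comparing the $H$-invariants of $V/T$ and $V$ costs only an $\SH^1$-term, which will be finite by the same compact $\ell$-adic Lie group fact already invoked in Proposition~\ref{proposition:basic-bound}.

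The first step is to prove $V^U = V^\Pi$ (and hence $T^U = T^\Pi$). Let $G'$ denote the Zariski closure of $U$ in $\SGL(V)$. Since $U$ is open in the profinite group $\Pi$, it has finite index there, so $\Pi$ is a finite union of cosets of $U$, and correspondingly $G_\ell$ is a finite union of translates of $G'$. Hence $G'$ is a closed subgroup of $G_\ell$ of finite index, and connectedness of $G_\ell$ forces $G' = G_\ell$. For any subset $H \subseteq \SGL(V)$, the fixed subspace $V^H$ is cut out by the Zariski-closed conditions $h \cdot v = v$, so $V^H$ depends only on the Zariski closure of $H$; in particular $V^U = V^{G'} = V^{G_\ell} = V^\Pi$. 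Set $V_0 := V^U = V^\Pi$ and $T_0 := T \cap V_0 = T^U = T^\Pi$.

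Taking $H$-cohomology of $0 \to T \to V \to M \to 0$ for any closed subgroup $H$ of $\Pi$ yields the exact sequence
$$0 \to T^H \to V^H \to M^H \to \SH^1(H,T),$$
so that $M^H/(V^H/T^H)$ injects into $\SH^1(H,T)$; since $M^H$ is $\ell$-power torsion, the injection factors through the torsion subgroup. Because $H$ is a closed subgroup of $\SGL_r(\Z_\ell)$, hence a compact $\ell$-adic Lie group, $\SH^1(H,T)$ is a topologically finitely generated $\Z_\ell$-module and its torsion is therefore finite. Applied to $H = U$, this shows that $M^U/(V_0/T_0)$ is finite; applied to $H = \Pi$, it shows that $V_0/T_0$ injects into $M^\Pi$. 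Since $V_0/T_0 \subseteq M^\Pi \subseteq M^U$, we conclude
$$[M^U : M^\Pi] \leq [M^U : V_0/T_0] < \infty.$$

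The only substantive step is the Zariski-closure argument proving $V^U = V^\Pi$, and this is precisely where connectedness of $G_\ell$ enters: without it, one would only obtain a finite-index subgroup $G' \subsetneq G_\ell$, and $V^{G'}$ could be strictly larger than $V^{G_\ell}$ -- exactly the failure illustrated by the footnote's toy example with $r=1$, $\Pi = \{\pm 1\}$, and $U$ trivial. The rest is standard cohomological bookkeeping.
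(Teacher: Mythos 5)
Your proof is correct and follows essentially the same route as the paper: connectedness of the Zariski closure gives $V^U=V^\Pi$ (hence $T^U=T^\Pi$), and then the coboundary into $\SH^1(U,T)$, whose torsion is finite since $U$ is a compact $\ell$-adic Lie group, bounds $M^U$ modulo the image of $V^U$. The only difference is cosmetic — you spell out the finite-index/identity-component argument for $V^U=V^\Pi$ that the paper leaves implicit, and you bound $[M^U:M^\Pi]$ by $[M^U:V^U/T^U]$ rather than via the paper's two-row diagram, which amounts to the same computation.
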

\begin{proof}  Since $G$ is connected, we have $V^U=V^\Pi$ and $T^U=T\cap V^U=T\cap V^\Pi=T^\Pi$. The commutative diagram with exact lines
 $$\xymatrix{0\ar[r]&V^\Pi/T^\Pi\ar@{=}[d]\ar[r]& M^\Pi\ar@{_{(}->}[d]\ar[r]^{\delta_\Pi}&\SH^1(\Pi,T)\ar[d]^{res}\\
0\ar[r]&V^U/T^U\ar[r]& M^U\ar[r]^{\delta_U}&\SH^1(U,T)}$$ 
yields an isomorphism
 $$M^U/M^\Pi\simeq \Sim(\delta_U)/\Sim(\delta_\Pi\circ res).$$
In particular, the group $M^U/M^\Pi$ is a quotient of $\Sim(\delta_U)$ and the conclusion follows from the fact $\Sim(\delta_U)$ is finite as a torsion submodule of $\SH^1(U,T)$.
 \end{proof}

\subsection{}\hspace{-0.3cm} For every positive integer $d$, define 
$$S_{\leq d}=\bigcup_{[K:k]\leq d} S(K).$$
Write $S^{gen}_{\ell, \leq d}$ and $S^{ex}_{\ell, \leq d}$ accordingly. The following key result is a reformulation of \cite[Theorem 1.1 and Remark 3.1.2(1)]{UOI2}. 

\begin{fait}\label{proposition:uniform-bound} Assume that $S$ is a curve. Then for every positive integer $d$, the image of $S^{ex}_{\ell, \leq d}$ in $S$ is finite and there exists an open subgroup $U_d$ of $\rho_\ell(\pi_1(S))$ such that $S^{gen}_{\ell, \leq d}\subset S_{U_d}$.
\end{fait}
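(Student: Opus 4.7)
The plan is to derive the statement directly from \cite[Theorem~1.1]{UOI2} together with \cite[Remark~3.1.2(1)]{UOI2}. Those results of Cadoret--Tamagawa concern an arbitrary lisse $\ell$-adic representation of the \'etale fundamental group of a smooth curve over a finitely generated field of characteristic zero. Applied to $\rho_\ell$, they assert that for every $d\geq 1$ there exists an open normal subgroup $U_d\subset \rho_\ell(\pi_1(S))$ and a finite set $\Sigma_d\subset S$ such that, for every $s\in S_{\leq d}$ whose image in $S$ does not lie in $\Sigma_d$, one has $\rho_\ell(\pi_1(s))\supset U_d$. Remark~3.1.2(1) is precisely what enables the passage from $k$-rational points to points of degree at most $d$.

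From this I would derive the first assertion immediately. If $s\in S^{ex}_{\ell,\leq d}$, then $\rho_\ell(\pi_1(s))$ is not open in $\rho_\ell(\pi_1(S))$, hence cannot contain the open subgroup $U_d$; therefore the image of $s$ in $S$ must lie in $\Sigma_d$. So the image of $S^{ex}_{\ell,\leq d}$ in $S$ is contained in the finite set $\Sigma_d$, proving the first half of the statement.

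For the second assertion I need every generic point of degree $\leq d$ to lie in $S_{U_d}$, not merely those with image outside $\Sigma_d$. The remaining potential obstacle is a generic point $s$ of degree $\leq d$ lying above some $s_0\in\Sigma_d$. For fixed $s_0$, the corresponding subgroups $\rho_\ell(\pi_1(s))\subset\rho_\ell(\pi_1(s_0))$ have index bounded in terms of $d$, and a compact $\ell$-adic Lie group admits only finitely many open subgroups of any given bounded index; so only finitely many such subgroups arise. Each of them is open in $\rho_\ell(\pi_1(S))$, being contained in the open subgroup $\rho_\ell(\pi_1(s))$ of the generic point, so one may shrink $U_d$ by intersecting it with the normal cores of these finitely many open subgroups in $\rho_\ell(\pi_1(S))$. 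The resulting refined $U_d$ remains open and normal in $\rho_\ell(\pi_1(S))$ and satisfies $\rho_\ell(\pi_1(s))\supset U_d$ for every $s\in S^{gen}_{\ell,\leq d}$.

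The substantive ingredient is of course \cite[Theorem~1.1]{UOI2}, the deep uniform openness result of Cadoret--Tamagawa, which I take as a black box and which is the only genuine obstacle here; the bookkeeping sketched above is a formality, as already signalled by the word ``reformulation'' in the statement of the Fact.
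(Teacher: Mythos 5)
Your proposal matches the paper's treatment: the Fact is stated there without an independent proof, precisely as a reformulation of \cite[Theorem 1.1 and Remark 3.1.2(1)]{UOI2}, which you likewise take as a black box, and your supplementary bookkeeping for degree-$\leq d$ points lying over the finite exceptional locus (a compact $\ell$-adic Lie group has only finitely many open subgroups of index at most $d$; intersect and pass to normal cores in $\rho_\ell(\pi_1(S))$) is sound and is the same elementary device the authors themselves use for exceptional points in the proof of Theorem \ref{theorem:main}. The only quibble is your guessed division of labour inside the citation --- the passage from rational points to points of bounded degree is the content of Theorem 1.1 of \cite{UOI2} itself (the rational-point case being \cite{UOI1}), with the cited Remark supplying the remaining refinement --- but this misattribution does not affect the correctness of the argument.
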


\bigskip

\begin{proof}[Proof of Theorem \ref{theorem:main}]
Without loss of generality, we may replace $k$ by a finite field extension and $S$ by its reduced subscheme. Also, from Proposition \ref{proposition:basic-bound}, we may replace $S$ by an open subscheme. Working separately on each connected component, we may assume that $S$ is smooth, separated and geometrically connected variety over $k$. Let  $U_d$ be an open subgroup of $\rho_\ell(\pi_1(S))$ as in Fact \ref{proposition:uniform-bound}. Let $s\in S_{\leq d}$ such that $X_s$ satisfies the $\ell$-adic Tate conjecture for divisors. 

If $s\in S_{\ell}^{gen}$, then  Fact \ref{proposition:uniform-bound} and (1) in Proposition \ref{proposition:bound-fixed-U} yield $|Br(X_{\overline s})^{\pi_1(s)}[\ell^\infty]|\leq C_{U_d}$, for some constant $C_{U_d}$ depending only on $U_d$.

If $s\in S_{\ell}^{ex}$ then, by Fact \ref{proposition:uniform-bound}, there are only finitely many possibilities for the image $s_0$ of $s$ in  $S$.  Since $\rho_\ell(\pi_1(s_0))$ is an $\ell$-adic Lie group, it has only finitely many open subgroups of index at most $d$. The intersection of all these open subgroups is an open subgroup $U_d(s_0)$ contained in $\rho_\ell(\pi_1(s))$. By construction $$Br(X_{\overline s})^{\pi_1(s)}[\ell^\infty]\subset Br(X_{\overline s})^{U_d(s_0)}[\ell^\infty].$$
From 
Proposition \ref{proposition:basic-bound} applied to $X_{s_0}$, $Br(X_{\overline s})^{U_d(s_0)}[\ell^\infty]$ is finite.
\end{proof}

\subsection{}\hspace{-0.3cm}Using Proposition \ref{proposition:bound-fixed-U}, (2) instead of (1)  in the proof above, one obtains the following unconditional variant of Theorem  \ref{theorem:main}.
 
\begin{theoreme}\label{theorem:mainbis} Let $k$ be a finitely generated field of characteristic zero and let $S$ be a quasi-projective curve over $k$. Let $\pi : X\ra S$ be a smooth, proper morphism. Then for every prime $\ell$ such that the Zariski-closure of the image of $\pi_1(S)$ acting on $\SH^2(X_{\overline{\eta}},\Q_\ell(1))$ is connected and for every positive integer $d$,  there exists a constant $C_d$  satisfying the following property: for every $s\in S_{\ell, \leq d}^{gen}$  $$[Br(X_{\overline s})^{\pi_1(s)}[\ell^\infty] : Br(X_{\overline \eta})^{\pi_1(S)}[\ell^\infty]]\leq C_d.$$
\end{theoreme}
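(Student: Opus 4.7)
The plan is to follow the skeleton of the proof of Theorem \ref{theorem:main}, substituting part (2) of Proposition \ref{proposition:bound-fixed-U} for part (1). The crucial simplification is that the statement is restricted to $s\in S^{gen}_{\ell,\leq d}$: the exceptional points, which required a case-by-case appeal to the Tate conjecture in the previous proof, are simply excluded from the statement, so the argument streamlines to a single application of the uniform $S_U$-argument.

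First I would make the same preliminary reductions as in the previous proof: replace $k$ by a finite extension, pass to the reduced structure of $S$, and work one geometrically connected component at a time, so as to assume $S$ is smooth, separated and geometrically connected over $k$. These operations only shrink $\rho_\ell(\pi_1(S))$ by a subgroup of finite index; since a closed subgroup of finite index in a connected algebraic group is the whole group, the Zariski closure $G_\ell$ is unchanged and the connectedness hypothesis is preserved. Next, invoke Fact \ref{proposition:uniform-bound} to produce an open subgroup $U_d\subset \rho_\ell(\pi_1(S))$ with $S^{gen}_{\ell,\leq d}\subset S_{U_d}$, and intersect $U_d$ with its (finitely many) conjugates to assume it is normal.

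Since the connectedness hypothesis is exactly what is required by part (2) of Proposition \ref{proposition:bound-fixed-U}, applying the latter to $U_d$ yields a constant $C_d:=C_{U_d}$ such that
$$\bigl[Br(X_{\overline s})^{\pi_1(s)}[\ell^\infty] : Br(X_{\overline\eta})^{\pi_1(S)}[\ell^\infty]\bigr]\leq C_d$$
for every $s\in S_{U_d}$, and in particular for every $s\in S^{gen}_{\ell,\leq d}$. I expect no serious obstacle: the real content lies already in Lemma \ref{lemme:formal}, where the connectedness of $G_\ell$ forces $V_\ell^{U_d}=V_\ell^{\pi_1(S)}$ and $T_\ell^{U_d}=T_\ell^{\pi_1(S)}$, reducing the index $M_\ell^{U_d}/M_\ell^{\pi_1(S)}$ to a subquotient of the torsion part of $\SH^1(U_d,T_\ell)$, which is automatically finite; the curve hypothesis enters only through Fact \ref{proposition:uniform-bound}, which is imported from \cite{UOI2}. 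The one point to be mindful of is that Proposition \ref{proposition:bound-fixed-U}(2) bounds the index of $Br(X_{\overline s})^{\pi_1(s)}[\ell^\infty]$ over $Br(X_{\overline\eta})^{\pi_1(S)}[\ell^\infty]$, which presupposes the Galois-equivariant specialization isomorphism on Brauer groups supplied by Proposition \ref{proposition:lifting-Tate}; this is automatic here since $S^{gen}_{\ell,\leq d}\subset S_\ell^{gen}$.
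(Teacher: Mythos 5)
Your proposal is correct and is essentially the paper's own argument: the paper deduces Theorem \ref{theorem:mainbis} by rerunning the proof of Theorem \ref{theorem:main} restricted to $S^{gen}_{\ell,\leq d}$, substituting Proposition \ref{proposition:bound-fixed-U}(2) for (1), exactly as you do via Fact \ref{proposition:uniform-bound}. Your additional remarks (passing to the normal core of $U_d$, preservation of the connectedness hypothesis under the preliminary reductions, and the role of the Galois-equivariant specialization isomorphism from Proposition \ref{proposition:lifting-Tate}) are correct fillings-in of details the paper leaves implicit.
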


 \noindent \begin{tabular}[t]{l}
\textit{anna.cadoret@imj-prg.fr}\\
IMJ-PRG -- Sorbonne Universit\'es, UPMC University Paris 6,\\
75005 PARIS, FRANCE\\
\\
\textit{francois.charles@math.u-psud.fr}\\
Universit\'e Paris-Sud,\\
91405 ORSAY, FRANCE.\\

\end{tabular}\\

\begin{thebibliography}{99}
      \bibitem[AVA16]{AVA}  D. \textsc{Abramovich} and A.  \textsc{V\'arilly-Alvarado}, {\it Level structures on abelian varieties, Kodaira dimensions, and Lang's conjecture}, Preprint 2016.
   \bibitem[B16]{Brunebarbe} Y. \textsc{Brunebarbe}, {\it A strong hyperbolicity property of locally symmetric varieties}, Preprint 2016.
   \bibitem[CT12]{UOI1} A. \textsc{Cadoret} and A. \textsc{Tamagawa},
{\it  A uniform open image theorem for $\ell$-adic representations I}, Duke Math. J. \textbf{161},  p. 2605--2634, 2012. 
  \bibitem[CT13]{UOI2} A. \textsc{Cadoret} and A. \textsc{Tamagawa},
{\it  A uniform open image theorem for $\ell$-adic representations II}, Duke Math. J. \textbf{162},  p. 2301--2344, 2013. 
\bibitem[CT16]{GA} A. \textsc{Cadoret} and A. \textsc{Tamagawa}, 
{\it Gonality of abstract modular curves in positive characteristic}, 
Compositio Math. \textbf{152}, p. 2405--2442, 2016.
\bibitem[CT17]{TI} A. \textsc{Cadoret} and A. \textsc{Tamagawa}, 
{\it Genus of abstract modular curves with level-$\ell$ structures}, to appear in Journal f\"ur die reine und angewandte Mathematik. 
\bibitem[Fa83]{Faltings83} G. \textsc{Faltings}, {\it Endlichkeitss\"atze für abelsche Variet\"aten über Zahlk\"orpern}, Invent. Math. \textbf{73}, 1983, p. 349-366.
\bibitem[Fr94]{FreyF} G. \textsc{Frey}
{\it  Curves with infinitely many points of fixed degree}, 
Israel J. Math. \textbf{85}, 1994, p. 79-83.  
\bibitem[KM83]{KM} J. \textsc{Kollar} and T. \textsc{Matsusaka}, {\it Riemann-Roch Type Inequalities}, American Journal of Mathematics, \textbf{105}, p. 229-252, 1983.
\bibitem[LMS14]{LMS14} M. \textsc{Lieblich}, D. \textsc{Maulik} and A. \textsc{Snowden}, {\it
Finiteness of K3 surfaces and the Tate conjecture}, Annales scientifiques de l'ENS \textbf{47}, fascicule 2, p. 285-308, 2014.
\bibitem[Ma69]{Manin} Ju. I. \textsc{Manin}, 
{\it The $p$-torsion of elliptic curves is uniformly bounded}, 
Math. USSR Izv. \textbf{3}, 
p. 433-438, 1969.
\bibitem[Ma77]{Mazur} B. \textsc{Mazur},
{\it Modular curves and the Eisenstein ideal}, 
Inst. Hautes Etudes Sci. Publ. Math. \textbf{47}, p. 33-186, 1977.
\bibitem[Me96]{Merel} L. \textsc{Merel},
{\it  Bornes pour la torsion des courbes elliptiques sur les corps de nombres},  Invent. Math. \textbf{124}, p. 437-449, 1996.
\bibitem[N52]{Neron} A. \textsc{N\'eron}, {\it Probl\`emes arithm\'etiques et g\'eom\'etriques rattach\'es \`a la notion de rang d'une
courbe alg\'ebrique dans un corps}, Bull. Soc. Math. France \textbf{80}, p. 101--166, 1952.
\bibitem[S64]{GCVA} J.-P. \textsc{Serre}, {\it Sur les groupes de congruence des vari\'et\'es ab\'eliennes}. Izv. Akad. Nauk SSSR Ser. Mat. \textbf{28}, p. 3--20, 1964.
\bibitem[S89]{SerreLMWT} J.-P. \textsc{Serre}, 
{\it Lectures on the Mordell-Weil theorem}, 
Aspects of Mathematics \textbf{E15}, 
Friedr. Vieweg \& Sohn,  
1989.
\bibitem[SGA6]{SGA6} {\it Th\'eorie des intersections et th\'eor\`eme de Riemann-Roch}, L.N.M. \textbf{225}, Springer-Verlag, Berlin, 1971 (French). S\'eminaire de G\'eom\'etrie Alg\'ebrique du Bois-Marie
1966-1967 (SGA 6); Dirig\'e par P. Berthelot, A. Grothendieck et L. Illusie. Avec la collaboration
de D. Ferrand, J. P. Jouanolou, O. Jussila, S. Kleiman, M. Raynaud et J. P. Serre.
  \bibitem[SkZ08]{SZJAG} A.  \textsc{Skorobogatov} and Ju. G. \textsc{Zarhin}, {\it A finiteness theorem for the Brauer group of abelian varieties and K3 surfaces}, Journal of Algebraic Geometry \textbf{17}, p. 481-502, 2008.
 \bibitem[T94]{TateMotives} J.  \textsc{Tate}, {\it Conjectures on algebraic cycles in $\ell$-adic cohomology} in \textit{Motives} (Seattle, WA, 1991), Proc. Symp. Pure Math. \textbf{55} Part I, A.M.S., p. 71-83, 1994. 
\bibitem[V17]{Varilly17} A. \textsc{V\'arilly-Alvarado}, {\it Arithmetic of $K3$ surfaces} in {\it Geometry over nonclosed fields}, (F. Bogomolov, B. Hassett, and Yu. Tschinkel eds.), Simons Symposia \textbf{5}, p. 197-248, 2017.
  \bibitem[VAV16]{VAV} A.  \textsc{V\'arilly-Alvarado} and B. \textsc{Viray}, {\it Abelian $n$-division fields of elliptic curves and Brauer groups of product of Kummer and abelian surfaces}, Forum of Mathematics, Sigma, to appear.
 \end{thebibliography}
  \end{document}